\theoremstyle{thmstyleone}%
\newtheorem{theorem}{Theorem}
\newtheorem{proposition}[theorem]{Proposition}%
\theoremstyle{thmstyletwo}%
\newtheorem{example}{Example}%
\newtheorem{remark}{Remark}%
\theoremstyle{thmstylethree}%
\newtheorem{lemma}[theorem]{Lemma}
\newtheorem{assumption}[theorem]{Assumption}
\begin{document}

\title[Article Title]{Improving Cramér--Rao Bound And Its Variants: An Extrinsic Geometry Perspective}


\author*[]{\fnm{Sunder Ram} \sur{Krishnan}}\email{eeksunderram@gmail.com}



\affil[]{\orgdiv{Department of Computer Science and Engineering}, \orgname{Amrita Vishwa Vidyapeetham}, \orgaddress{\street{Amritapuri}, \city{Kollam}, \postcode{690525}, \state{Kerala}, \country{India}}}




\abstract{This work presents a geometric refinement of the classical Cramér--Rao bound (CRB) in the non-asymptotic regime by incorporating curvature-aware corrections based on the second fundamental form associated with the statistical model manifold. That is, our formulation shows that relying on the extrinsic geometry of the square root embedding of the manifold in the ambient Hilbert space comprising square integrable functions with respect to a fixed base measure offers a rigorous (and intuitive) way to improve upon the CRB and some of its variants, such as the Bhattacharyya-type bounds, that use higher-order derivatives of the log-likelihood. Precisely, the improved bounds in the latter case make explicit use of the elegant framework offered by employing the Fa\`a di Bruno formula and exponential Bell polynomials in expressing the jets associated with the square root embedding in terms of the raw scores. The interplay between the geometry of the statistical embedding and the behavior of the estimator variance is quantitatively analyzed in concrete examples, showing that our corrections can meaningfully tighten the lower bound, suggesting further exploration into connections with estimator efficiency in more general situations.}

\keywords{Bhattacharyya Bound, Cramér-Rao Bound, Curvature, Second Fundamental Form, Statistical Manifold, Square Root Embedding.}



\maketitle

\section{Introduction}

The Cramér-Rao Bound (CRB) is a foundational result in parametric estimation theory, specifying a lower limit on the variance of any (locally) unbiased estimator under appropriate regularity conditions, and underpins the concept of estimator efficiency \cite{rao,vaart,schervish}.
While widely used, the CRB may be loose in several practical settings -- such as nonlinear models, low signal-to-noise regimes, or finite sample sizes -- leading to extensive literature on improved lower bounds. Notable classical refinements include the Chapman-Robbins bound \cite{chapman_robbins} and the Ziv-Zakai bound \cite{ziv_zakai}, which generalize the CRB beyond the usual regularity assumptions, such as differentiability under the integral sign, or invoke optimal detection problems.
Bayesian and hybrid variants of the CRB have been developed to incorporate prior uncertainty that avoid costly matrix inversions \cite{hybrid_crb}. Computational savings were also achieved with constrained CRBs and singular Fisher information matrices in \cite{tune2012_constrained_crb}.

A line of work that is of relevance to us includes the classical \emph{Bhattacharyya inequality} (\cite{b1946}; see also \cite{ZS,l1998}) that generalizes the CRB by incorporating higher-order derivatives of the log-likelihood and is of importance when an estimator that is efficient in the sense of the CRB is not available. Conditions under which a Bhattacharyya bound is a valid lower bound for the variance were studied in \cite{seth1949variance} by Seth, and considerations regarding attainment of the Bhattacharyya bound can be found in the work of Fend \cite{fend1959attainment}. It is also important for us to note that the Bhattacharyya bound can be viewed as a hierarchy of improvements over the CRB -- an increasing sequence of variance bounds that converge to the estimator variance -- based on projections of the estimator error onto spans of higher-order derivatives of the (log)likelihood. Convergence of the sequence of bounds to the variance of the best unbiased estimator for certain functions of the underlying parameter was shown in \cite{blight1974convergence} by Blight and Rao when the sampling distribution is one among normal, binomial, Poisson, negative binomial, or gamma with scale parameter. The same result was extended by Ghosh and Sathe for all estimable functions of the parameter and all multiparameter exponential families \cite{ghosh1987convergence}, provided the true value of the parameter is an interior point of the natural parameter space. Similar considerations were also made later by Pillai and Sinha \cite{pillai}. However, these elegant formulations of the Bhattacharyya bound lack a geometric interpretation despite the Hilbert space foundation.
In short, these constructions remain intrinsically \emph{score-based} and extrinsic geometry of the embedded statistical manifold, which results in nontrivial correction if the estimator error does not lie in the span of the considered higher-order derivatives, is not accounted for; this is exactly what we point out in our research.

Information geometry offers an alternative framework in which statistical models are viewed as Riemannian manifolds endowed with the Fisher-Rao metric. This viewpoint allows estimator performance to be studied using geometric concepts such as curvature, geodesics, and connections \cite{amari}. A pioneering work of direct relevance to ours is that of Efron \cite{efron1975} who tried to quantify the proximity of an arbitrary one-parameter family to the standard exponential class of distributions, which is, in some sense, a geodesic in the space of all possible probability distributions. The exponential family enjoys nice statistical properties; for instance, the maximum likelihood estimator (MLE) is a sufficient statistic and, provided that we choose an appropriate function of the parameter to estimate, attains the CRB. Efron introduced the notion of ``statistical curvature" of a one-parameter family at a given point and showed that large curvature corresponds to a breakdown of the good properties just alluded to; for a specific example, the variance of the MLE exceeds the CRB in approximate proportion to the squared statistical curvature -- asymptotically, the MLE extracts all but a part of the information in a sample that is proportional to the curvature squared. To add some more detail, Efron considered the problem of estimating the scalar parameter in a curved exponential family, on the basis of an i.i.d. sample of size \(n\), using a squared error loss function to evaluate consistent and efficient estimators that are smooth functions of the sufficient statistic. There he proved that the \(O(\tfrac{1}{n^2})\) term in the variance asymptotics precisely involves the squared statistical curvature, linking it to Rao's influential second-order efficiency theory. It is pertinent to note that Efron's curvature is the curvature of an embedding associated with the particular way a parameter space is placed inside a natural, higher-dimensional parameter space, as measured by the second fundamental form. \\A related fact, in the context of deriving the asymptotic MSE of a bias-corrected first-order efficient estimator, is given in Theorem 4.4 of \cite{amari}, where the statistical curvature term is identified with the e-curvature of the model manifold. In a similar vein to Efron's work, Takeuchi and Akahira \cite{takeuchi2003second} rigorously analyzed the higher-order efficiency of sequential estimation procedures in the scalar parameter case (their results also hold for nonsequential estimation). Continuing along these lines, Okamoto et al. \cite{okamoto1991asymptotic} developed a higher-order asymptotic theory of sequential estimation in the framework of the geometry of multidimensional curved exponential families leading to a design principle of the second-order efficient sequential estimation procedure.\\We note with great interest that Jim Reeds, in a discussion on Efron's work, points out that he concludes statements about the coefficients of the asymptotic expansions of the variance, but not about the variance itself as captured in conclusions using the CRB and Bhattacharyya inequalities. This fact, the square root embedding, and considering the relationship between the log-likelihood and square root embeddings to evaluate the effectiveness, represent three important points of departure in our work from Efron's significant contribution and the other papers mentioned above in this body of literature.

Most other information geometry-based works tend to focus on intrinsic properties and divergence measures, as briefly discussed below.
Several papers have extended the CRB to intrinsic Riemannian settings. In an important contribution, Smith \cite{smith_intrinsic_crb} derived intrinsic versions of the CRB on manifolds utilizing an arbitrary affine connection with arbitrary geodesics for both biased and unbiased estimators while Barrau and Bonnabel \cite{boumal2013_intrinsic_crb}, in a follow-up, elucidated the extent to which the intrinsic CRB obtained by Smith is independent of a specific choice of coordinates. Further, the authors also addressed a question revolving around the dependence of the intrinsic bound on the underlying parameter. Extending these results to the Bayesian setting, Bouchard et al. \cite{bouchard2024} studied the problem of covariance matrix estimation when the data is sampled from a Gaussian distribution whose covariance matrix is drawn from an inverse Wishart distribution. Performance bounds for this problem were obtained, for both the MSE (Euclidean metric) and the natural Riemannian distance for Hermitian positive definite matrices (affine invariant metric), using the derived intrinsic Bayesian CRB. Interestingly, it was shown that the maximum a posteriori and the minimum MSE estimators are asymptotically efficient in the intrinsic case, but not so in the Euclidean situation. In a slightly different vein, Loutchko et al. \cite{loutchko2025} developed, among other results, a multivariate CRB by exploring the Riemannian geometrical aspects of the information geometry of chemical reaction networks.

Another important thread involves divergence-based generalizations of the CRB. Mishra and Kumar \cite{mishra2020} proposed a Riemannian metric based on relative $\alpha$-entropy to obtain a generalized Bayesian Cramér-Rao inequality, thereby establishing a lower bound for the variance of an unbiased estimator for the $\alpha$-escort distribution starting from an unbiased estimator for the underlying distribution. Subsequently, Kumar and Mishra \cite{ashok2020} provided a unifying differential geometric framework for deriving CRB-like inequalities from generalized Csiszár divergences. They applied Eguchi's theory to derive the Fisher information metric and dual affine connections arising from these generalized divergence functions, which enabled them to prove a more widely applicable version of the Cramér-Rao inequality. In \cite{mishra2021}, these authors again relied on Eguchi's theory of obtaining dualistic geometric structures from a divergence function and then applied Amari-Nagoaka's theory \cite{amari} to obtain a Cramér-Rao type inequality. These approaches serve to find unbiased and efficient estimators, but typically remain in an intrinsic, divergence-induced geometry.

However, we note that a conceptual gap remains. The algebraic projection-based CRBs, such as those of Bhattacharyya and some of the follow-ups we met above, do not exploit the geometric curvature of the statistical manifold. While it may be argued that the Bhattacharyya-type bounds following Efron's paper capture second-order curvature information, these are all asymptotic and do not capture the inherent geometry in non-asymptotic variance expressions. In contrast, many of the information geometry-based CRBs focus on intrinsic structures and rarely intersect with projection-theoretic Hilbert space methods. This motivates our research on incorporating \emph{extrinsic} geometric effects -- such as the second fundamental form -- into (non-asymptotic) variance bounds.

In recent years, there has been a rapidly growing body of work that develops estimation theory, efficiency notions, and algorithmic methods based on the \emph{Wasserstein geometry} of probability measures, complementing the classical Fisher--Rao framework. A central contribution in this direction is due to Li and Zhao \cite{Li}, who introduced the \emph{Wasserstein information matrix} (WIM) by pulling back the Otto--Wasserstein metric from the space of probability measures to finite-dimensional parametric statistical models. This construction yields Wasserstein analogues of score functions, covariance operators, and a Wasserstein--CRB, together with asymptotic efficiency results for Wasserstein natural gradient methods. Subsequent work has further clarified the statistical meaning and attainability of these bounds. In particular, Nishimori and Matsuda \cite{nishimori2025} studied conditions under which estimators attain the Wasserstein--CRB asymptotically, identifying Wasserstein analogues of exponential families and establishing Wasserstein efficiency for location--scale models. Closely related is the work of Garc\'ia Trillos, Jaffe, and Sen \cite{garciatrillos_wcrb}, who developed a detailed Wasserstein--Cram\'er--Rao theory for unbiased estimation, interpreting Wasserstein variance as a notion of estimator \emph{sensitivity} to infinitesimal perturbations of the data, and this insight allowed them to derive a collection of results which are analogous to the classical case; for instance, a characterization of models in which there exist unbiased estimators achieving the lower bound exactly. At a more structural level, Li and Zhao \cite{LiZhaoScalingLimitsWIM} investigate scaling limits of WIM on Gaussian mixture models, showing that suitably renormalized pullback Wasserstein metrics admit well-defined limits as component variances vanish. Their analysis yields generalized Wasserstein metrics on probability simplices, including second-order corrections and associated Wasserstein gradient flows. This line of work deepens the understanding of Wasserstein-induced geometry on parametric statistical models, but is primarily concerned with metric limits and dynamical behavior, rather than variance lower bounds or curvature-induced corrections to classical Fisher-based Cramér–Rao inequalities.

These Wasserstein-based approaches represent a substantial conceptual shift: rather than refining the classical variance bound associated with the Fisher--Rao (or Hellinger) geometry, they redefine the notion of estimator instability using transport-based metrics. As a consequence, the resulting bounds are not directly comparable to the classical Cram\'er--Rao or Bhattacharyya inequalities, but instead address robustness and sensitivity properties that are invisible to Fisher-based variance analysis.

Complementary to the above Wasserstein–Cramér–Rao–type developments, several works emphasize the role of the ambient metric in shaping representations and induced geometries on families of probability distributions. Arias-Castro and Qiao \cite{AriasCastroQiaoEmbedding} study the embedding of distributional data via classical scaling and Isomap, adapting tools from Euclidean dimensionality reduction to probability measures and explicitly highlighting how the choice of metric (in particular, Wasserstein-type metrics) governs the resulting geometry and statistical structure. While their focus is on representation and visualization rather than estimation-theoretic bounds, their results further underscore the central role played by the ambient geometry of distribution space.

A related line of work compares Fisher and Wasserstein geometries directly. Amari and Matsuda \cite{amari_matsuda_wasserstein} studied affine deformation models and showed that Wasserstein geometry naturally separates shape and deformation parameters, trading Fisher efficiency for robustness against waveform perturbations; they further demonstrated that Wasserstein estimators coincide with MLE only in the Gaussian case. Fukushi, Nakanishi-Ohno, and Matsuda \cite{fukushi2024flatness} investigated the intrinsic and extrinsic geometry of location--scale--shape models under the Wasserstein metric, showing that while such models may be intrinsically flat, they can still exhibit extrinsic curvature in the ambient Wasserstein space. These results highlight how extrinsic geometry plays a role in Wasserstein statistics, albeit in a setting fundamentally different from Fisher--Rao variance analysis.

From a computational and algorithmic perspective, Wasserstein geometry has also been extensively studied in connection with optimization, sampling, and gradient flows. Wu and Gardner \cite{wu_gardner_ngvi} analyzed stochastic natural gradient variational inference, establishing non-asymptotic convergence rates for conjugate models and clarifying the geometric limitations of natural gradient methods in non-conjugate settings. Zuo's recent dissertation \cite{zuo2025} further advances the computational theory of Wasserstein gradient flows, developing primal--dual damping algorithms, neural approximations of Wasserstein flows, and accelerated Langevin-type samplers. These works emphasize computational efficiency and dynamics in Wasserstein space, rather than statistical lower bounds for estimator variance.

Finally, Lawson et al.~\cite{lawson_fisher_geometry_normals} provided a comprehensive and practitioner-oriented account of Fisher geometry for multivariate normal distributions, focusing on geodesics, distances, and numerical computation of shortest paths. Their work elucidates the curvature induced by covariance structures and compares Fisher geodesics with alternative paths such as those arising from Wasserstein geometry, but does not address variance lower bounds or projection-based refinements of the Cram\'er--Rao inequality.

In contrast to the above Wasserstein-based and algorithmic frameworks, the present paper remains entirely within the classical Fisher--Rao variance paradigm and addresses a complementary question: how to \emph{sharpen non-asymptotic variance lower bounds} for (locally) unbiased estimators by exploiting the \emph{extrinsic geometry} of the statistical model embedded in an ambient Hilbert space. Our approach does not redefine variance or efficiency, nor does it replace the Fisher information metric. Instead, it refines the classical CRB and Bhattacharyya-type bounds using log-likelihood derivatives by quantifying the contribution of estimator error components orthogonal to the tangent space, using the second fundamental form of the square root embedding/normal components that lie beyond the span of the scores. To the best of our knowledge, this constitutes the first systematic use of extrinsic curvature to obtain explicit, non-asymptotic improvements over the CRB, the emphasis being on the qualification \emph{non-asymptotic}, thereby bridging projection-theoretic variance bounds and geometric information theory in a manner distinct from both intrinsic Fisher geometry and Wasserstein-based regularizations.

That is, in this work, we bridge the gaps identified above by developing a Hilbert space projection-based and Riemannian geometric refinement of classical and higher-order CRBs. Suppose the family of probability distributions $\{P_\theta\}$, parameterized by the scalar $\theta\in \Theta$, is absolutely continuous with respect to a fixed base measure \(\mu\) with a strictly positive density \(f(\cdot;\theta)\). Define the square root embedding \(s_\theta: \Theta\mapsto L^2(\mu)\) by \(s_\theta:=\sqrt{f(\cdot;\theta)}\), where \(L^2(\mu)\) is the space of square integrable functions with respect to \(\mu\), and consider the jets \[
\eta_k(\theta):=\partial_\theta^k s_\theta,\qquad k\ge1.
\] Starting from a projection-theoretic decomposition of the estimator error in the ambient space $L^2(\mu)$, we derive a residual component orthogonal to the span of \(\eta_1\). We then show that the contribution of this residual can be explicitly quantified using the second fundamental form of the statistical manifold embedded in $L^2(\mu)$, yielding a curvature-aware correction to the CRB. A higher-order version of this result is also stated. Further, a variance bound that explicitly betters Bhattacharyya-type results using derivatives of the log-likelihood is derived by analyzing the structure of the \(\eta_k\) in relation to the raw score and its derivatives \(\partial_\theta^k\log f(\cdot;\theta)\). Importantly, note that we refer to results that use \(\partial_\theta^k\log f(\cdot;\theta)\), or other derivatives such as the \(\eta_k\), for \(k\geq 1\) as Bhattacharyya-type bounds throughout the paper since the classical Bhattacharyya bound used \(\frac{\partial^k_\theta f}{f}\) instead.

Our contributions are as follows.
\begin{itemize}
 \item We derive extrinsic geometric refinement terms using the second fundamental form in the non-asymptotic regime, providing insight into how curvature controls estimator error beyond linear approximations, and establish corrections to the classical CRB -- see Theorem \ref{thm:m1-corrected}.
    \item We interpret the higher-order variance lower bound sequence obtained by Bhattacharyya \cite{b1946}, and other authors following him, as successive $L^2$ projections, laying the foundation for geometric corrections making use of higher derivatives of the square root embedding and curvature information ``beyond a certain order" (cf. Theorem \ref{thm:curvature-corrected}). Thus, our version of the Bhattacharyya-type bound offers a clean geometric explanation for the differences between successive bounds.
    \item By a careful analysis of the form of the jets \(\eta_k\) expressed in terms of the raw score and its derivatives \(\partial_\theta^k\log f(\cdot;\theta)\) via the Fa\`a di Bruno formula and the exponential Bell polynomials, we identify orthogonal components arising from the corresponding expansions. It is established that \(m\)-th order Bhattacharyya-type bounds using log-likelihood derivatives are improved by those that use the jets \(\{\eta_k\}_{k=1}^m\) when treating the aforementioned orthogonal components as extrinsic corrections; this result can be found in Proposition \ref{prop:single-Naug-relations}.
    \item The proposed framework is exemplified by applying it to explicit models, demonstrating strict (and often fully analytical) improvement over both classical CRBs and algebraic higher-order bounds. Specifically, we endeavor to illuminate how curvature plays a role in quantifying estimator inefficiency.
\end{itemize}

To sum up/reiterate, this work unifies algebraic and geometric approaches to variance lower bounds and represents, to the best of our knowledge, the first use of extrinsic curvature to sharpen the CRB in the non-asymptotic scenario. It opens new avenues in the intersection of information geometry, Hilbert space theory, and statistical estimation.

\section{Preliminaries}

In this section, we fix notation and the geometric framework, including square root embedding, connections, and second fundamental form, used in the sequel. The section is concluded with a brief sketch of our plan to use these ideas to refine the CRB and some higher-order versions of the kind worked out in \cite{b1946}.

\subsection{Notation, square root embedding, and assumptions}
Let \(\mathcal P=\{P_\theta:\theta\in\Theta\subset\mathbb R\}\) be a smooth parametric family of probability measures on a measurable space such that each \(P_\theta\) has a strictly positive density \(f(\cdot;\theta)\) with respect to a fixed base measure \(\mu\).

We work with a statistical manifold embedded in the fixed Hilbert space \(L^2(\mu)\) via the square root map.
Consider the square root embedding
\[
s:\Theta\to L^2(\mu),\qquad s(\theta):=s_\theta=\sqrt{f(\cdot;\theta)}.
\]
Denote the \(L^2(\mu)\) Hilbert space inner product, norm, and orthogonal projection onto a subspace \(V\) by
\[
\langle u,v\rangle := \int u(x)v(x)\,d\mu(x),\qquad \|u\|^2=\langle u,u\rangle, \text{ and }\mathrm{Proj}_V(\cdot).
\]
We have \(s_\theta\in L^2(\mu)\) and \(\|s_\theta\|^2=1\) for all \(\theta\). 
Expectation under \(P_\theta\) is denoted \(\mathbb E_\theta[\cdot]\).  For any measurable \(h\), we have \(\mathbb E_\theta[h(X)]=\int h(x)f(x;\theta)\,d\mu(x)=\langle h,s_\theta^2\rangle\).
 
\begin{assumption}
\label{ass:regularity}
Assume 
\[
\eta_k(\theta):=\partial_\theta^k s_\theta,\qquad k\ge1,
\] exist in the Fréchet sense in \(L^2(\mu)\). Further, these are assumed to be linearly independent.  Differentiation under the integral sign is permitted; with \(T(\cdot): \mathbb E_\theta[T]<\infty\),
\[
\frac{\partial}{\partial\theta}\int T(x) f(x;\theta)d\mu=\int T(x)\frac{\partial f(x;\theta)}{\partial\theta} d\mu,
\] 
whenever the right hand side is finite.
\end{assumption}
Define the (raw) score and higher derivatives of the log-density:
\[
Y_k(x;\theta):=\partial_\theta^k\log f(x;\theta),\qquad k\ge1,
\]
and the Fisher information \(\mathcal{I}(\theta):=\mathbb E_\theta[Y_1^2]\).\\
We have, in particular,
\begin{equation*}
\eta_1(\theta)=\partial_\theta s_\theta=\tfrac12 Y_1\,s_\theta,
\end{equation*}
and
\begin{equation*}
\langle\eta_1,\eta_1\rangle=\tfrac14\mathbb E_\theta[Y_1^2]=\tfrac14 \mathcal{I}(\theta).
\end{equation*}

The Fisher--Rao metric on the 1-dimensional parameter manifold is \(g(\partial_\theta,\partial_\theta)=\mathcal{I}(\theta)\), with \(\partial_\theta\) denoting the coordinate vector field. Equip \(s(\Theta)\) with the induced metric from \(L^2(\mu)\). The pullback metric from the square root embedding then equals \(\tfrac{1}{4} \mathcal{I}(\theta)\).

Fix \(m\ge1\).  Use Assumption \ref{ass:regularity} to define the finite-dimensional jet subspace
\[
\mathcal T_m(\theta):=\operatorname{span}\{\eta_1(\theta),\eta_2(\theta),\dots,\eta_m(\theta)\}\subset L^2(\mu),
\]
and the Gram matrix
\begin{equation}\label{eq:G-def}
G(\theta)=(G_{ij}(\theta))_{1\le i,j\le m},\qquad G_{ij}(\theta):=\langle\eta_i,\eta_j\rangle.
\end{equation}
By definition and assumed linear independence, the matrix \(G(\theta)\) is symmetric positive definite.

\subsection{Geometric preliminaries}
We now proceed to considering some fundamental geometric and miscellaneous preliminaries (the latter in the next subsection) that serve to make our presentation self-contained. \paragraph{Ambient (flat) derivative and induced connection}
As alluded to, we view \(L^2(\mu)\) as the ambient Hilbert space with the standard flat metric connection \(\nabla\); the ambient directional derivative in the coordinate direction \(\partial_\theta\) (that is, the pushforward of the coordinate field) acts in the natural way:
\[
\nabla_{\partial_\theta}\eta_j = \partial_\theta\eta_j = \eta_{j+1},\qquad j\ge1,
\]
We trivially have that \(\nabla\) satisfies metric-compatibility and torsion-freeness:
\[
X\langle U,V\rangle = \langle \nabla_X U, V\rangle + \langle U, \nabla_X V\rangle,
\qquad \nabla_X Y - \nabla_Y X = [X,Y],
\]
for arbitrary smooth ambient vector fields \(U,V,X,Y\).

Consider the \emph{induced connection} obtained by orthogonally projecting the ambient derivative onto \(\mathcal T_m\):
\begin{equation}\label{eq:induced-connection}
\nabla^{\mathrm{ind}}_{\partial_\theta}\eta_m:=\operatorname{Proj}_{\mathcal T_m}(\eta_{m+1}).
\end{equation}
Write the induced connection coefficients in the basis \(\{\eta_k\}_{k=1}^m\):
\[
\nabla^{\mathrm{ind}}_{\partial_\theta}\eta_m=\sum_{k=1}^m\Gamma^{\mathrm{proj}}_{k}\,\eta_k,
\]
where we have suppressed the dependence of the row vector \(\Gamma^{\mathrm{proj}}\), with components \(\Gamma^{\mathrm{proj}}_{k}\), on \(m\) (and ignored the risk of confusion!). With a similar notational ambiguity, define the row vector \(v(\theta)\) by
\begin{equation}\label{eq:A-def}
v_{j}(\theta):=\langle\eta_{m+1},\eta_j\rangle,\qquad 1\le j\le m.
\end{equation}
Solving the normal equations for the orthogonal projection yields the compact formula
\begin{equation}\label{eq:GammaProj}
\Gamma^{\mathrm{proj}}(\theta)=v(\theta)\,G(\theta)^{-1}.
\end{equation}
In particular, for \(m=1\) we have the identity
\begin{equation}
\label{eq:pro}
\Gamma^{\mathrm{proj}}=\frac{\langle\eta_2,\eta_1\rangle}{\langle\eta_1,\eta_1\rangle}.
\end{equation}
Again with \(m=1\), it is standard to show that the induced connection \(\nabla^{\mathrm{ind}}\) on the tangent space \(Ts(\Theta)\) obtained by orthogonally projecting the ambient derivative
  \[
  \nabla^{\mathrm{ind}}_X Y := \operatorname{Proj}_{Ts(\Theta)}\bigl(\nabla_X \widetilde Y\bigr),
  \]
  where \(\widetilde Y\) is any ambient extension of the vector field \(Y\), is well-defined; that is, it is independent of extension. It is also standard that it is torsion-free and metric-compatible with respect to the pull back metric, and hence coincides with the Levi--Civita connection of the immersed statistical manifold \(s(\Theta)\). We refer interested readers to standard references on Riemannian geometry such as \cite{lee2006riemannian}. The motivation behind \eqref{eq:induced-connection} for \(m>1\) should also now be clear.
\begin{remark}
\label{rem:l2}
Note that we work with the fixed Hilbert space \(L^2(\mu)\) because it provides a fixed inner product and a metric-compatible Levi--Civita connection. The ambient connection in \(L^2(P_\theta)\) is not compatible with the metric there. Of course, one could enforce metric-compatibility by hand (by “redefining the Christoffel symbols”), but then we are no longer working with the Levi--Civita connection of the ambient Hilbert space -- the geometry is compromised -- and we would be simply importing back the \(L^2(\mu)\) structure in disguise. The key role played by the square root map is also highlighted in our work \cite{Carkri} in the context of the associated jet bundle geometry. 
\end{remark}
\paragraph{Second fundamental form (vectors) and normal bundle}
\label{sec:II-def}
Define the ``second fundamental vector" by taking the normal component of the ambient derivative:
\begin{equation}\label{eq:II-components}
\mathrm{\Pi}_m:=\eta_{m+1}-\sum_{k=1}^m\Gamma^{\mathrm{proj}}_{k}\,\eta_k.
\end{equation}
For \(m=1\), this is just the standard second fundamental form \(\mathrm{II}(\eta_1,\eta_1)\). By construction, \(\mathrm{II}_m\in\mathcal T_m(\theta)^\perp\), i.e.
\[
\langle\mathrm{II}_m,\eta_i\rangle=0\qquad\text{for }1\le i\le m.
\]

Define the curvature (normal) span
\begin{equation}\label{eq:N_m-def}
\mathcal N_m(\theta):=\operatorname{span}\{\mathrm{II}_m\}\subset\mathcal T_m(\theta)^\perp.
\end{equation}
Note that \(\Pi_m\) may be interpreted as the remnant of the ``acceleration" \(\eta_{m+1}\) after subtracting out the ``tangential" part in \(\mathcal{T}_m\). A more formal presentation of the second fundamental vector may be found in our follow up \cite{Carkri}, where we cast it within the Cartan prolongation framework applied to the hierarchy of jet bundles. 

\subsection{Fa\`a di Bruno and exponential Bell polynomials}
\label{sec:faa-di-bruno-bell}

The combinatorics that repeatedly appear in our derivations, when expressing the jets \(\eta_j\) in terms of the raw scores \(Y_j\), are precisely encoded by the Fa\`a di Bruno formula and the exponential Bell polynomials. Below we state the identities in the form used in the paper and give a few expressions that result for relevant quantities such as \(v, G\), which in turn determine the induced connection via \(\Gamma^{\text{proj}}\), in terms of (mixed) moments of the \(Y_j\).\\
Recall \(s_\theta(x)=\sqrt{f(x;\theta)}\). For brevity, we suppress the dependence on \(x\) and \(\theta\) when there is no ambiguity. Set
\[
u(\theta):=\tfrac12\log f(\,\cdot\,;\theta),\qquad\text{so that}\qquad s_\theta=\exp\big(u(\theta)\big).
\]
Write \(u^{(r)}:=\partial_\theta^r u(\theta)\). Recall the (first-order) score \(Y_1=\partial_\theta \log f\) so that
\[
u^{(1)}=\tfrac12 Y_1,\quad u^{(2)}=\tfrac12 Y_2,\ \dots,\ u^{(k)}=\tfrac12 Y_k.
\]

\paragraph{Exponential (complete) Bell polynomials.}  
The \emph{complete exponential Bell polynomial} \(B_k(x_1,\dots,x_k)\) is defined for \(k\ge1\) by
\[
B_k(x_1,\dots,x_k)
\;=\; \sum_{m=1}^k B_{k,m}(x_1,\dots,x_{k-m+1}),
\]
where \(B_{k,m}\) are the \emph{partial} Bell polynomials. The explicit combinatorial formula for the partial Bell polynomials is
\begin{equation}\label{eq:partial-bell}
B_{k,m}(x_1,\dots,x_{k-m+1})
\;=\;
\sum_{\substack{(j_1,\dots,j_{k-m+1})\\[2pt] j_i\ge0}}
\frac{k!}{j_1! j_2!\cdots j_{k-m+1}!}\;
\prod_{i=1}^{\,k-m+1}\left(\frac{x_i}{i!}\right)^{j_i},
\end{equation}
the sum taken over all nonnegative integer solutions \((j_1,\dots,j_{k-m+1})\) of the two constraints
\[
\sum_{i=1}^{k-m+1} j_i \;=\; m,\qquad
\sum_{i=1}^{k-m+1} i\,j_i \;=\; k.
\]
Intuitively, each term of \(B_{k,m}\) corresponds to a partition of the set \(\{1,\dots,k\}\) into \(m\) blocks, where \(j_i\) counts the number of blocks of size \(i\).

\paragraph{Fa\`a di Bruno (exponential form).}  
A convenient form of Fa\`a di Bruno for the \(k\)-th derivative of an exponential composition \(s(\theta)=\exp(u(\theta))\) is
\begin{equation}\label{eq:faa-exp}
\frac{d^k}{d\theta^k}\, e^{u(\theta)}
\;=\; e^{u(\theta)}\, B_k\!\big(u^{(1)}(\theta),u^{(2)}(\theta),\dots,u^{(k)}(\theta)\big).
\end{equation}
Equivalently, with the partial Bell polynomials,
\[
\frac{d^k}{d\theta^k}\, e^{u(\theta)}
\;=\; e^{u(\theta)}\sum_{m=1}^k B_{k,m}\!\big(u^{(1)},u^{(2)},\dots,u^{(k-m+1)}\big).
\]
This identity is the standard Fa\`a di Bruno formula specialized to the case when the outer function in the composition is the exponential function.

\paragraph{Application to the square root embedding.}  
Apply \eqref{eq:faa-exp} with \(u(\theta)=\tfrac12\log f(\cdot;\theta)\). Since \(s_\theta=\exp(u(\theta))\), we obtain for every integer \(k\ge1\):
\begin{equation}\label{eq:eta-bell}
\eta_k(\cdot\,;\theta) \;=\; \partial_\theta^k s_\theta
\;=\; s_\theta\; B_k\!\big(u^{(1)},u^{(2)},\dots,u^{(k)}\big)
\;=\; s_\theta\; B_k\!\Big(\tfrac12 Y_1,\tfrac12 Y_2,\dots,\tfrac12 Y_k\Big).
\end{equation}
This is the compact identity of great relevance in the paper. The above expresses each \(k\)-th jet \(\eta_k\) as the product of the base function \(s_\theta\) and a finite linear combination of monomials of the form \(Y_{i_1}Y_{i_2}\cdots Y_{i_k}\), with coefficients determined by the Bell polynomials.

\paragraph{Computation of the Gram matrix \(G\) and vector \(v\).}
Using \eqref{eq:eta-bell} the Gram matrix entries are
\[
G_{ij} = \langle\eta_i,\eta_j\rangle = \int s_\theta^2 \; B_i\!\Bigl(\tfrac12 Y_1,\dots,\tfrac12 Y_i\Bigr)\;
B_j\!\Bigl(\tfrac12 Y_1,\dots,\tfrac12 Y_j\Bigr)\,d\mu,
\]
which is equivalently an expectation under \(P_\theta\):
\[
\quad G_{ij} \;=\; \mathbb E_\theta\!\Big[ B_i\bigl(\tfrac12 Y_1,\dots,\tfrac12 Y_i\bigr)
\; B_j\bigl(\tfrac12 Y_1,\dots,\tfrac12 Y_j\bigr)\Big] \quad.
\]
Similarly, the vector \(v\) defined in \eqref{eq:A-def} has entries
\[
v_{j} = \langle\eta_{m+1},\eta_j\rangle
= \mathbb E_\theta\!\Big[ B_{m+1}\bigl(\tfrac12 Y_1,\dots,\tfrac12 Y_{m+1}\bigr)
\; B_j\bigl(\tfrac12 Y_1,\dots,\tfrac12 Y_j\bigr)\Big].
\]
Both \(G\) and \(v\) are therefore computable from the joint moments of the vector \((Y_1,\dots,Y_{m+1})\).

\paragraph{Lower-order expansions.}  
For reader convenience, we record the first few cases obtained from \eqref{eq:eta-bell} by using the definitions \(B_1(x_1)=x_1\), \(B_2(x_1,x_2)=x_1^2+x_2\), \(B_3(x_1,x_2,x_3)=x_1^3+3x_1x_2+x_3\):
\begin{align}
\eta_1 &= s_\theta\;\tfrac12 Y_1, \label{eq:eta1}\\[4pt]
\eta_2 &= s_\theta\;\Big(\tfrac14 Y_1^2 + \tfrac12 Y_2\Big), \label{eq:eta2}\\[4pt]
\eta_3 &= s_\theta\;\Big(\tfrac18 Y_1^3 + \tfrac34 Y_1 Y_2 + \tfrac12 Y_3\Big). \label{eq:eta3}
\end{align}
With this, we may compute the entries of \(G\) and \(v\), and the resulting \(\mathrm{II}_m\). 

\medskip\noindent\textbf{Order 1:}
In the scalar \(m=1\) case, from \eqref{eq:eta1},
\[
G_{11} = \langle\eta_1,\eta_1\rangle = \tfrac14 \mathbb E_\theta[Y_1^2].
\]
\[
v_{1} = \langle\eta_2,\eta_1\rangle
= \Big\langle \bigl(\tfrac12 Y_2 + \tfrac14 Y_1^2\bigr)s_\theta, \tfrac12 Y_1 s_\theta\Big\rangle
= \tfrac14\mathbb E_\theta[Y_2Y_1] + \tfrac18\mathbb E_\theta[Y_1^3].
\]
Further,
\[
\Gamma^{\mathrm{proj}}_{1} = \frac{v_{1}}{G_{11}}.
\]
Therefore the second fundamental form equals
\begin{equation}
\label{eq:alt}
\mathrm{II}_1 = \eta_2 - \Gamma^{\mathrm{proj}}_{1}\,\eta_1
= s_\theta\Big(\tfrac12 Y_2 + \tfrac14 Y_1^2 - \frac{v_{1}}{G_{11}}\cdot \tfrac12 Y_1\Big).
\end{equation}

\medskip\noindent\textbf{Order 2:}
We now rely also on \eqref{eq:eta2} and \eqref{eq:eta3}. The relevant Gram entries are
\begin{align*}
G_{12} &= \langle\eta_1,\eta_2\rangle
= \tfrac14\mathbb E_\theta[Y_1Y_2] + \tfrac18\mathbb E_\theta[Y_1^3],\\
G_{22} &= \langle\eta_2,\eta_2\rangle
= \mathbb E_\theta\Big[\bigl(\tfrac12 Y_2 + \tfrac14 Y_1^2\bigr)^2\Big]
= \tfrac14\mathbb E_\theta[Y_2^2] + \tfrac14\mathbb E_\theta[Y_2Y_1^2] + \tfrac{1}{16}\mathbb E_\theta[Y_1^4].
\end{align*}
Similarly,
\[
v_{1}=\langle\eta_3,\eta_1\rangle=\tfrac14\mathbb E_\theta[Y_3Y_1]+\tfrac38\mathbb E_\theta[Y_1^2Y_2]+\tfrac{1}{16}\mathbb E_\theta[Y_1^4],
\]
and
\[
v_{2}=\langle\eta_3,\eta_2\rangle=\mathbb E_\theta\Big[\bigl(\tfrac12 Y_3 + \tfrac34 Y_1Y_2 + \tfrac18 Y_1^3\bigr)\bigl(\tfrac12 Y_2 + \tfrac14 Y_1^2\bigr)\Big],
\]
which also expands into a linear combination of mixed moments.

With the \(2\times2\) matrix \(G=[G_{ij}]\) and \(1\times 2\) vector \(v=[v_{j}]\), we may then compute the row vector \(\Gamma^{\mathrm{proj}}=v G^{-1}\).  Subsequently, using
\[
\mathrm{II}_2 = \eta_3 - \Gamma^{\mathrm{proj}}_{1}\,\eta_1 - \Gamma^{\mathrm{proj}}_{2}\,\eta_2,
\]
and substituting the exact Bell polynomial-based identities for \(\eta_1,\eta_2,\eta_3\) gives the fully explicit expansion of \(\mathrm{II}_2\).

\subsection{Outlook and road ahead}
\label{sub:road}
Consider any unbiased estimator \(T(X)\), where \(X\sim P_\theta\), of the scalar parameter \(\theta\) and define the centered estimation error \(Z_0:=T(X)-\theta\) viewed as an element of \(L^2(P_\theta)\) or, equivalently, focus on the vector \(\widetilde{Z}_0:=Z_0s_\theta\in L^2(\mu)\). Similarly, also define \(\widetilde{Y}_1:=Y_1s_\theta, \widetilde{Y}_2:=Y_2 s_\theta, \dots, \widetilde{Y}_m:=Y_m s_\theta\) and \(\widetilde{\mathcal{T}}_m:=\mathrm{span}\{\widetilde{Y}_1, \dots, \widetilde{Y}_m\}.\) 

Now, it is easy enough to show (we do it below) that \(\|\mathrm{Proj}_{\mathcal{T}_1}\widetilde{Z}_0\|^2=\|\mathrm{Proj}_{\widetilde{\mathcal{T}}_1}\widetilde{Z}_0\|^2=\tfrac{1}{\mathcal{I}(\theta)}\), which lower bounds the variance \(\mathrm{Var}_\theta[T]\). 
Our insight is so simple, hence elegant: Orthogonally decompose
\[
\widetilde{Z}_0=\operatorname{Proj}_{\mathcal T_m}\widetilde{Z}_0 + R_m,\qquad R_m\in\mathcal T_m^\perp.
\]
Then the squared norm \(\|\operatorname{Proj}_{\mathcal T_m}\widetilde{Z}_0\|^2\) is the projection-based portion of the variance (for \(m=1\) it reproduces the CRB, as just mentioned); the residual \(\|R_m\|^2\) is the unexplained variance by the \(m\)-term projection. Projecting \(R_m\) further onto the curvature span \(\mathcal N_m\) (see \eqref{eq:N_m-def}) yields an explicit curvature correction. In summary, we may refine the CRB by considering the curvature-induced residual for \(m=1\). We emphasize again that this idea is distinct from the method of Bhattacharyya \cite{b1946} which uses higher-order derivatives of the likelihood and does not exploit the \emph{extrinsic geometry}, as we do, via the square root embedding. Those works that adopt an extrinsic differential geometric viewpoint, such as those of Efron \cite{efron1975} (see also Theorem 4.4 in \cite{amari}), are statements about asymptotic variance bounds.

With \(m>1\), Bhattacharyya-type results in the literature \cite{b1946, ZS, l1998} can be considered as improving the CRB using the subspace \(\widetilde{\mathcal{T}}_m\), and \textit{not} \(\mathcal{T}_m=\mathrm{span}\{\eta_1,\dots,\eta_m\}\), in the projection above. In any case, we prefer to work with \(\mathcal{T}_m\) in developing our Bhattacharyya-type bound, which can be viewed as a curvature-correction theorem (cf. Theorem \ref{thm:curvature-corrected} below), for reasons related to those quoted in Remark \ref{rem:l2}; in short, this approach offers an intuitive and correct geometric picture. One may ask: How does the subspace \(\mathcal{T}_m\) compare with \(\widetilde{\mathcal{T}}_m\) or, equivalently, how does our Bhattacharyya-type theorem relate to those that appeared before, using log-likelihood derivatives? While the spaces \(\widetilde{\mathcal{T}}_1=\mathcal{T}_1\), the answer is a bit more subtle in the \(m>1\) case, and we refer interested readers to Section \ref{sec:tt} for some results that shed light on this scenario and Section \ref{sec:bhat} for further discussion. Our conclusion, which improves bounds of the nature of Bhattacharyya using log-likelihood, appears in Proposition \ref{prop:single-Naug-relations} where we specifically use the relationship between the \(\eta_j\) and \(Y_j\) given in \eqref{eq:eta-bell}.

\section{Curvature-corrected variance bounds -- CRB case}

We now state and prove the main bounds where, unless mentioned otherwise and as has been the case throughout, all inner products are those of the fixed Hilbert space \(L^2(\mu)\), and all objects depend on the true parameter value \(\theta\) (but we suppress the explicit \(\theta\) argument, as has been frequent practice, for notational brevity). Recall that we have established (cf. \eqref{eq:pro} or \eqref{eq:alt}) that the second fundamental form is the normal component of \(\eta_2\):
\[
\mathrm{II}(\eta_1,\eta_1) \;=\; \eta_2 - \frac{\langle\eta_2,\eta_1\rangle}{\langle\eta_1,\eta_1\rangle}\,\eta_1
\;\in\; \mathcal T_1^\perp.
\]
The proof of the \(m>1\) case in Theorem \ref{thm:curvature-corrected} essentially covers that of \(m=1\) as well, but we state the \(m=1\) case separately since it corresponds to addressing inefficiency with respect to the classical CRB.
\begin{theorem}[Residual variance and CRB refinement, \(m=1\)]
\label{thm:m1-corrected}
Assume the model satisfies Assumption \ref{ass:regularity}. Fix the true parameter value \(\theta\) and let \(T(X)\) be an estimator of \(\theta\) which is unbiased (in a neighbourhood of \(\theta\)). Define the centered error
\[
Z_0(X) := T(X)-\theta.
\]
With \(s_\theta=\sqrt{f(\cdot;\theta)}\in L^2(\mu)\), consider the ambient error vector, the one-dimensional tangent space, and residual, respectively:
\[
\widetilde{Z}_0 := Z_0\,s_\theta,
\quad
\mathcal T_1=\operatorname{span}\{\eta_1\}\subset L^2(\mu),\quad
R_1:=\widetilde{Z}_0 - \operatorname{Proj}_{\mathcal T_1}\widetilde{Z}_0 \in \mathcal T_1^\perp.
\]
Finally denote the Fisher information by \(\mathcal I(\theta)=\mathbb E_\theta[Y_1^2]\), where the raw score \(Y_1=\partial_\theta\log f(X;\theta)\).

Then the following statements hold.
\begin{enumerate}
  \item (Variance decomposition/CRB recovery)
  \[
  \operatorname{Var}_\theta[T]
  \;=\; \big\|\operatorname{Proj}_{\mathcal T_1}\widetilde{Z}_0\big\|^2 \;+\; \|R_1\|^2
  \;\ge\; \frac{1}{\mathcal I(\theta)}.
  \]
  Specifically, under the stated unbiasedness and regularity assumptions
  \[
  \big\|\operatorname{Proj}_{\mathcal T_1}\widetilde{Z}_0\big\|^2 \;=\; \frac{1}{\mathcal I(\theta)},
  \]
  so the classical Cram\'er--Rao lower bound is obtained from the \(m=1\) projection of the centered error onto the tangent space \(\mathcal{T}_1\).
  \item (Curvature lower bound)
  \begin{equation}\label{eq:thm1-corrected}
  \|R_1\|^2
  \;\ge\; \frac{\big\langle \widetilde{Z}_0,\; \mathrm{II}(\eta_1,\eta_1)\big\rangle^2}
  {\big\| \mathrm{II}(\eta_1,\eta_1)\big\|^2},
  \end{equation}
  with equality if and only if \(R_1\) is collinear (in \(L^2(\mu)\)) with \(\mathrm{II}(\eta_1,\eta_1)\).
\end{enumerate}
In particular, combining our conclusions gives the curvature-corrected bound
\[
\operatorname{Var}_\theta[T] \;=\; \|\widetilde{Z}_0\|^2
\;\ge\; \frac{1}{\mathcal I(\theta)} \;+\; \frac{\big\langle \widetilde{Z}_0,\; \mathrm{II}(\eta_1,\eta_1)\big\rangle^2}
{\big\| \mathrm{II}(\eta_1,\eta_1)\big\|^2}.
\]
\end{theorem}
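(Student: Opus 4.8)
The plan is to treat the whole statement as an exercise in orthogonal projection in the fixed Hilbert space $L^2(\mu)$, so that the variance appears as a squared norm that splits by Pythagoras. First I would record the identification $\operatorname{Var}_\theta[T]=\|\widetilde{Z}_0\|^2$: since $\widetilde Z_0=Z_0 s_\theta$ with $s_\theta^2=f(\cdot;\theta)$ and $T$ is (locally) unbiased, $\|\widetilde Z_0\|^2=\int (T-\theta)^2 f\,d\mu=\mathbb E_\theta[(T-\theta)^2]=\operatorname{Var}_\theta[T]$. The orthogonal decomposition $\widetilde Z_0=\operatorname{Proj}_{\mathcal T_1}\widetilde Z_0+R_1$ with $R_1\in\mathcal T_1^\perp$ then gives, by the Pythagorean theorem, $\|\widetilde Z_0\|^2=\|\operatorname{Proj}_{\mathcal T_1}\widetilde Z_0\|^2+\|R_1\|^2$, which is exactly the decomposition asserted in part (1).

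For the CRB-recovery claim I would compute the one-dimensional projection explicitly. Since $\mathcal T_1=\operatorname{span}\{\eta_1\}$, we have $\operatorname{Proj}_{\mathcal T_1}\widetilde Z_0=\tfrac{\langle\widetilde Z_0,\eta_1\rangle}{\langle\eta_1,\eta_1\rangle}\eta_1$, so $\|\operatorname{Proj}_{\mathcal T_1}\widetilde Z_0\|^2=\langle\widetilde Z_0,\eta_1\rangle^2/\langle\eta_1,\eta_1\rangle$. Using $\eta_1=\tfrac12 Y_1 s_\theta$ I get $\langle\widetilde Z_0,\eta_1\rangle=\tfrac12\mathbb E_\theta[(T-\theta)Y_1]$. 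The analytic input here is the first-order unbiasedness identity $\mathbb E_\theta[(T-\theta)Y_1]=1$, obtained by differentiating $\mathbb E_\theta[T-\theta]=0$ under the integral sign (permitted by Assumption \ref{ass:regularity}) and using $\partial_\theta f=Y_1 f$ together with $\mathbb E_\theta[Y_1]=0$. Combining this with $\langle\eta_1,\eta_1\rangle=\tfrac14\mathcal I(\theta)$ yields $\|\operatorname{Proj}_{\mathcal T_1}\widetilde Z_0\|^2=(\tfrac12)^2/(\tfrac14\mathcal I(\theta))=1/\mathcal I(\theta)$, so the inequality in part (1) and the recovery of the classical CRB follow at once.

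For part (2) I would bound $\|R_1\|^2$ from below by the square of its component along the curvature direction. Both $R_1$ and $\mathrm{II}(\eta_1,\eta_1)$ lie in $\mathcal T_1^\perp$, and $\mathcal N_1=\operatorname{span}\{\mathrm{II}(\eta_1,\eta_1)\}$, so the one-dimensional projection (equivalently Cauchy--Schwarz) gives $\|R_1\|^2\ge\langle R_1,\mathrm{II}(\eta_1,\eta_1)\rangle^2/\|\mathrm{II}(\eta_1,\eta_1)\|^2$, with equality precisely when $R_1$ is collinear with $\mathrm{II}(\eta_1,\eta_1)$. The step that makes the bound usable is to replace $R_1$ by the full error vector in the numerator: since $\operatorname{Proj}_{\mathcal T_1}\widetilde Z_0\in\mathcal T_1$ while $\mathrm{II}(\eta_1,\eta_1)\in\mathcal T_1^\perp$, the cross term vanishes and $\langle R_1,\mathrm{II}(\eta_1,\eta_1)\rangle=\langle\widetilde Z_0,\mathrm{II}(\eta_1,\eta_1)\rangle$. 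Substituting this gives \eqref{eq:thm1-corrected}, and adding the two contributions produces the final curvature-corrected bound.

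The routine parts are genuinely routine, so the proof carries little risk once the projection picture is fixed; the step deserving the most care is the justification of the identity $\mathbb E_\theta[(T-\theta)Y_1]=1$, which is exactly where the differentiation-under-the-integral clause of Assumption \ref{ass:regularity} and local unbiasedness are invoked. Two smaller points I would flag explicitly: the curvature term is read as $0$ (and the bound holds trivially) when $\mathrm{II}(\eta_1,\eta_1)=0$, i.e. when the embedding has vanishing second fundamental form at $\theta$; and the correction $\langle\widetilde Z_0,\mathrm{II}(\eta_1,\eta_1)\rangle^2/\|\mathrm{II}(\eta_1,\eta_1)\|^2$ still depends on the estimator through $\widetilde Z_0$, so that, unlike the leading $1/\mathcal I(\theta)$ term, it is an estimator-specific sharpening rather than a universal constant fixed by the unbiasedness constraints alone---a feature worth stating so the reader does not expect the numerator to be determined a priori.
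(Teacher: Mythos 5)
Your proposal is correct and follows essentially the same route as the paper's proof: the Pythagorean decomposition in $L^2(\mu)$, the explicit one-dimensional projection computation using $\eta_1=\tfrac12 Y_1 s_\theta$ and the unbiasedness identity $\mathbb E_\theta[(T-\theta)Y_1]=1$, and the Cauchy--Schwarz/projection inequality in $\mathcal T_1^\perp$ with the cross-term cancellation $\langle R_1,\mathrm{II}(\eta_1,\eta_1)\rangle=\langle\widetilde Z_0,\mathrm{II}(\eta_1,\eta_1)\rangle$. The two additional remarks you flag (the degenerate case $\mathrm{II}=0$ and the estimator-dependence of the correction) are sensible observations not emphasized in the paper but do not change the argument.
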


\begin{proof}
1. Orthogonally decompose the ambient vector \(\widetilde{Z}_0\) into tangential and normal parts relative to \(\mathcal T_1\):
\[
\widetilde{Z}_0 = \operatorname{Proj}_{\mathcal T_1}\widetilde{Z}_0 + R_1,
\qquad \operatorname{Proj}_{\mathcal T_1}\widetilde{Z}_0 \in \mathcal T_1,\ R_1\in\mathcal T_1^\perp.
\]
Noting that \(\|\widetilde{Z}_0\|^2=\mathbb E_\theta[(T-\theta)^2]=\operatorname{Var}_\theta[T]\), taking squared norms, and using orthogonality yields the variance decomposition
\[
\operatorname{Var}_\theta[T] = \big\|\operatorname{Proj}_{\mathcal T_1}\widetilde{Z}_0\big\|^2 + \|R_1\|^2.
\]
The orthogonal projection onto the one-dimensional subspace \(\mathcal T_1=\operatorname{span}\{\eta_1\}\) has squared norm
\[
\big\|\operatorname{Proj}_{\mathcal T_1}\widetilde{Z}_0\big\|^2
=\frac{\langle \widetilde{Z}_0,\eta_1\rangle^2}{\langle\eta_1,\eta_1\rangle}.
\]
Using the identity \(\eta_1=\tfrac12 Y_1 s_\theta\) we compute
\[
\langle\eta_1,\eta_1\rangle = \frac14\,\mathbb E_\theta[Y_1^2] = \frac14\,\mathcal I(\theta),
\qquad
\langle \widetilde{Z}_0,\eta_1\rangle = \big\langle Z_0 s_\theta,\tfrac12 Y_1 s_\theta\big\rangle
= \tfrac12\,\mathbb E_\theta[(T-\theta)Y_1].
\]
Under the unbiasedness and regularity hypotheses we may differentiate \(\mathbb E_\theta[T]=\theta\) with respect to \(\theta\) and interchange derivative and expectation; this yields \(\mathbb E_\theta[T Y_1]=1\). Similarly, since \(f(\cdot;\theta)\) integrates to \(1\), we get \(\mathbb E_\theta[Y_1]=0\) and hence \(\mathbb E_\theta[(T-\theta)Y_1]=1\). Therefore
\[
\langle \widetilde{Z}_0,\eta_1\rangle = \tfrac12,
\quad\text{and}\quad
\big\|\operatorname{Proj}_{\mathcal T_1}\widetilde{Z}_0\big\|^2
= \frac{(1/2)^2}{\mathcal I(\theta)/4} = \frac{1}{\mathcal I(\theta)}.
\]

2. Since \(\mathrm{II}(\eta_1,\eta_1)\in\mathcal T_1^\perp\) and \(R_1\in\mathcal T_1^\perp\), both vectors lie in the same subspace \(\mathcal T_1^\perp\). In that subspace, the elementary one-dimensional projection inequality gives
\[
\|R_1\|^2 \;\ge\; \frac{\langle R_1,\mathrm{II}(\eta_1,\eta_1)\rangle^2}{\|\mathrm{II}(\eta_1,\eta_1)\|^2},
\]
with equality iff \(R_1\) is collinear with \(\mathrm{II}(\eta_1,\eta_1)\). Finally, because \(\operatorname{Proj}_{\mathcal T_1}\widetilde{Z}_0\) is orthogonal to \(\mathrm{II}(\eta_1,\eta_1)\), we have
\[
\langle R_1,\mathrm{II}(\eta_1,\eta_1)\rangle = \langle \widetilde{Z}_0,\mathrm{II}(\eta_1,\eta_1)\rangle,
\]
which yields \eqref{eq:thm1-corrected} and completes the proof.
\end{proof}
\section{Higher-order CRB and curvature corrections in the square root embedding}
\label{sec:ps-and-curvature}

Needless to (re)state, we keep Assumption \ref{ass:regularity} so that all quantities below are well defined in the fixed Hilbert space \(L^2(\mu)\).  We first state the Hilbert-space formulation of the Bhattacharyya bound in \(L^2(P_\theta)\) (as conceptualized in other works such as \cite{fend1959attainment}, for example) with the caveat that we use derivatives of the log-likelihood and not the ratios \(\frac{\partial_\theta^k f}{f}\), show its equivalent formulation in the ambient space \(L^2(\mu)\), and then present a basic curvature-corrected bound formulated relative to the jet subspace \(\mathcal T_m=\operatorname{span}\{\eta_1,\dots,\eta_m\}\); the core idea here is the same as for the \(m=1\) case. Taking into account the normal directions in \(\mathcal{T}_m^\perp\) other than \(\Pi_m\) (which uses \(\eta_{m+1}\)) in Proposition \ref{prop:single-Naug-relations}, we explicitly show that improvements over Bhattacharyya-type bounds using derivatives of the raw score of the same order \(m\) can be achieved by comparing the subspaces \(\mathcal{T}_m\) and \(\widetilde{\mathcal{T}}_m\).

\subsection{Bhattacharyya-type bounds in \(L^2(P_\theta)\) and its square root representation}
The following theorem rephrases the core insight of other authors (for instance, \cite{fend1959attainment, blight1974convergence, pillai}), who construct a sequence of variance lower bounds using higher-order score functions systematically covering the Bhattacharyya variants \cite{b1946}. We claim no real novelty in the following result, but include the same anyway for the following reasons: While their work asserts that equality in the $m$-th-order bound results if estimator error lies in the span of score functions up to order $m$, our formulation provides a characterization of equivalence using Hilbert space projections, and sets the stage for further improvements to be derived later via basic Riemannian geometric arguments.

As regards notation, consider $L^2(P_\theta)$ with inner product $\langle f, g \rangle_\theta = \mathbb{E}_\theta[f(X)g(X)]$ and norm $\|f\|_\theta^2 = \langle f, f \rangle_\theta$. We use $\operatorname{Proj}^\theta_{V}(\cdot)$ to denote orthogonal projection to a subspace $V$ of $L^2(P_\theta)$.
\begin{theorem}[Hilbert space characterization of higher-order CRB \cite{b1946}]
\label{thm:ps-L2P}
Let $T(X)$ be an unbiased estimator of a scalar parameter $\theta$, and define the centered error $Z_0 := T(X) - \theta \in L^2(P_\theta)$. For each $k\ge 1$, consider the $k$-th score function $Y_k = \partial_\theta^k \log f(X; \theta)$ ($\in L^2(P_\theta)$), and define the finite-dimensional subspace:
\[
\mathcal{T}^Y_m := \operatorname{span}\{Y_1, \dots, Y_m\} \subset L^2(P_\theta),
\]
where the $Y_i$ are assumed to be linearly independent and \(m\geq 1\). Let $G^Y \in \mathbb{R}^{m \times m}$ be the Gram matrix with entries $G^Y_{ij} := \langle Y_i, Y_j \rangle_\theta$, and define $b \in \mathbb{R}^m$ by $b_i := \langle Z_0, Y_i \rangle_\theta$. Denote the variance $\sigma^2 := \|Z_0\|_\theta^2 = \operatorname{Var}_\theta[T(X)]$, and define the matrix:
\[
B^{(m)} := \mathbb{E}_\theta \left[ \begin{bmatrix} Z_0 \\ Y_1 \\ \vdots \\ Y_m \end{bmatrix} \begin{bmatrix} Z_0 \\ Y_1 \\ \vdots \\ Y_m \end{bmatrix}^\top \right] = 
\begin{bmatrix}
\sigma^2 & b^\top \\
b & G^Y
\end{bmatrix}.
\]
Then we have $\operatorname{Var}_\theta[T(X)]\geq b^\top (G^{Y})^{-1} b= \|\operatorname{Proj}^\theta_{\mathcal{T}^Y_k}(Z_0)\|_\theta^2$.
Further, the following are equivalent:
\begin{enumerate}
    \item $Z_0 \in \mathcal{T}^Y_m$, i.e., the projection error vanishes: $\|Z_0 - \operatorname{Proj}^\theta_{\mathcal{T}^Y_m}(Z_0)\|_\theta^2 = 0$;
    \item $B^{(m)}$ is singular; specifically, $\left(\left[(B^{(m)})^{-1}\right]_{11}\right)^{-1}=0$ (under the convention that \(\tfrac{1}{\infty}=0\)).
\end{enumerate}

Define $\mathcal{B}^{(k)} := \|\operatorname{Proj}^\theta_{\mathcal{T}^Y_k}(Z_0)\|_\theta^2 $ for $k\geq 1$. If either of the above equivalent conditions hold, then $ \mathcal{B}^{(1)}\leq  \mathcal{B}^{(2)}\leq \dots$, and for all $k \ge m$,
    \[
    \mathcal{B}^{(k)} = \operatorname{Var}_\theta[T(X)].
    \]
\end{theorem}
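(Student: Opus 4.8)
The plan is to treat the entire statement as orthogonal-projection geometry in the Hilbert space $L^2(P_\theta)$, mirroring the $m=1$ argument of Theorem \ref{thm:m1-corrected} but now relative to the nested subspaces $\mathcal{T}^Y_m$. First I would establish the main inequality. Writing the orthogonal decomposition $Z_0 = \operatorname{Proj}^\theta_{\mathcal{T}^Y_m}(Z_0) + R$ with $R \perp \mathcal{T}^Y_m$, the Pythagorean identity gives $\sigma^2 = \|Z_0\|_\theta^2 = \|\operatorname{Proj}^\theta_{\mathcal{T}^Y_m}(Z_0)\|_\theta^2 + \|R\|_\theta^2 \ge \|\operatorname{Proj}^\theta_{\mathcal{T}^Y_m}(Z_0)\|_\theta^2$. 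Since $\{Y_1,\dots,Y_m\}$ are linearly independent with Gram matrix $G^Y$, solving the normal equations yields $\operatorname{Proj}^\theta_{\mathcal{T}^Y_m}(Z_0) = \sum_{i,j} [(G^Y)^{-1}]_{ij}\, b_j\, Y_i$ and hence $\|\operatorname{Proj}^\theta_{\mathcal{T}^Y_m}(Z_0)\|_\theta^2 = b^\top (G^Y)^{-1} b$, which is exactly the claimed closed form and recovers the Bhattacharyya/Pillai--Sinha bound. This step is routine and parallels the computation $\Gamma^{\mathrm{proj}} = v G^{-1}$ already carried out in the preliminaries.

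The heart of the argument is the equivalence (1) $\Leftrightarrow$ (2), which I would handle through the Schur complement. The matrix $B^{(m)}$ is precisely the Gram matrix of the ordered family $\{Z_0, Y_1, \dots, Y_m\}$ in $L^2(P_\theta)$, so it is positive semidefinite and singular if and only if this family is linearly dependent; because the $Y_i$ are already linearly independent, singularity forces $Z_0$ to lie in their span, i.e.\ $Z_0 \in \mathcal{T}^Y_m$, and conversely. To make the quantitative statement precise I would compute the Schur complement of the block $G^Y$ in $B^{(m)}$, namely $S := \sigma^2 - b^\top (G^Y)^{-1} b$. Two facts then close the loop: by the block-inversion formula $[(B^{(m)})^{-1}]_{11} = S^{-1}$ whenever $S \neq 0$, so that $([(B^{(m)})^{-1}]_{11})^{-1} = S$; and by the factorization $\det B^{(m)} = \det(G^Y)\, S$ with $\det G^Y > 0$, we have $B^{(m)}$ singular $\Leftrightarrow S = 0$. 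Comparison with the decomposition of the first step identifies $S = \|R\|_\theta^2$, so that $S=0 \Leftrightarrow R=0 \Leftrightarrow Z_0 \in \mathcal{T}^Y_m$, completing the equivalence.

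For the monotonicity and stabilization I would use only the nesting $\mathcal{T}^Y_1 \subseteq \mathcal{T}^Y_2 \subseteq \cdots$, which is immediate from the definition. Orthogonal projections onto an increasing chain of subspaces have non-decreasing squared norms, giving $\mathcal{B}^{(1)} \le \mathcal{B}^{(2)} \le \cdots$ unconditionally. If condition (1) holds then $Z_0 \in \mathcal{T}^Y_m \subseteq \mathcal{T}^Y_k$ for every $k \ge m$, whence $\operatorname{Proj}^\theta_{\mathcal{T}^Y_k}(Z_0) = Z_0$ and therefore $\mathcal{B}^{(k)} = \|Z_0\|_\theta^2 = \operatorname{Var}_\theta[T(X)]$, as claimed.

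I expect the main obstacle to be purely notational rather than conceptual: the expression $([(B^{(m)})^{-1}]_{11})^{-1}$ is literally undefined at the point where $B^{(m)}$ becomes singular, since the inverse ceases to exist there. Some care is therefore needed to read condition (2) as the vanishing of the Schur complement $S$ --- a quantity that remains well defined because $G^Y$ is invertible by the assumed linear independence of the scores --- rather than as a property of a nonexistent inverse. Interpreting $([(B^{(m)})^{-1}]_{11})^{-1}$ as $S$ (its value for all nonsingular $B^{(m)}$, extended by continuity) removes the ambiguity and makes the equivalence exact.
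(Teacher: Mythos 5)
Your proposal is correct and follows essentially the same route as the paper: the normal-equations computation giving $b^\top (G^Y)^{-1} b$ as the squared projection norm, the Schur-complement identification $[(B^{(m)})^{-1}]_{11} = (\sigma^2 - b^\top (G^Y)^{-1} b)^{-1}$ for the equivalence, and the nesting of the subspaces $\mathcal{T}^Y_k$ for monotonicity and stabilization. Your closing remark about reading condition (2) as the vanishing of the Schur complement (since the inverse literally fails to exist at singularity) is a fair clarification of the paper's phrasing but does not alter the argument.
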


\begin{proof}
Since $T(X)$ is unbiased, $Z_0 \in L^2(P_\theta)$ satisfies $\mathbb{E}_\theta[Z_0] = 0$ and its variance equals $\|Z_0\|_\theta^2$.\\ 
We can write:
\[
\operatorname{Proj}^\theta_{\mathcal{T}^Y_m}(Z_0) = \sum_{k=1}^m a_k Y_k, \quad \text{where } a = (G^Y)^{-1}b,
\]
and the squared projection norm is:
\[
\|\operatorname{Proj}^\theta_{\mathcal{T}^Y_m}(Z_0)\|_\theta^2 = b^\top (G^Y)^{-1} b.
\]
Thus, the residual variance is:
\[
0\leq \|Z_0 - \operatorname{Proj}^\theta_{\mathcal{T}^Y_m}(Z_0)\|_\theta^2 = \|Z_0\|_\theta^2 - b^\top (G^Y)^{-1} b = \sigma^2 - b^\top (G^Y)^{-1} b.
\]
\noindent
Now recall that the inverse of the block matrix $B^{(m)}$ has:
\[
\left[(B^{(m)})^{-1}\right]_{11} = \frac{1}{\sigma^2 - b^\top (G^Y)^{-1} b}= \frac{1}{\|Z_0 - \operatorname{Proj}^\theta_{\mathcal{T}^Y_m}(Z_0)\|_\theta^2},
\]
from the Schur complement of $G^Y$ in $B^{(m)}$. Therefore, the following are equivalent:\\
(i) \( Z_0 \in \mathcal{T}^Y_m \) or \( \|Z_0 - \operatorname{Proj}^\theta_{\mathcal{T}^Y_m}(Z_0)\|_\theta^2 = 0 \),\\
(ii) \( B^{(m)} \) is singular and \( \left(\left[(B^{(m)})^{-1}\right]_{11}\right)^{-1}=0 \) under the convention \(\frac{1}{\infty}=0\).

\noindent
Since \( Z_0 \in \mathcal{T}^Y_m \), projections onto a larger space, which are nondecreasing in norm until $k<m$, stabilize at $k=m$; i.e. for $k>m$, since \( \mathcal{T}^Y_k \supseteq \mathcal{T}^Y_m \), projections give the same result. Thus, \( \|\operatorname{Proj}^\theta_{\mathcal{T}^Y_k}(Z_0)\|_\theta^2 = \|Z_0\|_\theta^2 \) for all \( k \ge m \).
\end{proof}

\begin{remark}
For \(m=1\), it is obvious that the general bound \( b^\top (G^Y)^{-1} b \) becomes:
\[
\mathcal{B}^{(1)} = \frac{1^2}{\mathcal{I}(\theta)} = \frac{1}{\mathcal{I}(\theta)}.
\]
\end{remark}

\begin{lemma}[square root isometry and score-image representation]
\label{lem:sqrt-isometry}
Let \(s_\theta=\sqrt{f(\cdot;\theta)}\) and define the map \(\mathcal J:L^2(P_\theta)\to L^2(\mu)\) by \((\mathcal J h)(x):=h(x)\,s_\theta(x)\).   
For \(k\ge1\), define the score-image \(\widetilde{Y}_k:=\mathcal J(Y_k)=Y_k s_\theta\) and \(\widetilde Z_0:=\mathcal J(Z_0)=Z_0 s_\theta\).  Then
\[
\langle Y_i,Y_j\rangle_{\theta}=\langle\widetilde{Y}_i,\widetilde{Y}_j\rangle,\qquad
\langle Z_0,Y_i\rangle_{\theta}=\langle\widetilde Z_0,\widetilde{Y}_i\rangle.
\]
Consequently Theorem \ref{thm:ps-L2P} is equivalent to the identical projection statement in the fixed Hilbert space \(L^2(\mu)\) with subspace \(\widetilde{\mathcal{T}}_m:=\operatorname{span}\{\widetilde{Y}_1,\dots,\widetilde{Y}_m\}\). \\Specifically, $\mathcal{B}^{(k)}= \|\operatorname{Proj}^\theta_{\mathcal{T}^Y_k}(Z_0)\|_\theta^2 =\|\operatorname{Proj}_{\widetilde{\mathcal{T}}_k}(\widetilde{Z}_0)\|^2$, for $k\geq 1$.
\end{lemma}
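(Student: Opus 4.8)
The plan is to observe that the multiplication operator $\mathcal{J}$ is exactly the canonical isometry between $L^2(P_\theta)$ and $L^2(\mu)$ furnished by the square-root embedding, and then to invoke the standard fact that a linear isometry carries orthogonal projections onto a subspace to orthogonal projections onto the image of that subspace. With this in hand, every quantity in Theorem \ref{thm:ps-L2P} is transported verbatim, and the two displayed inner-product identities reduce to immediate substitutions.

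First I would establish that $\mathcal{J}$ is a well-defined, inner-product-preserving map. For $h\in L^2(P_\theta)$, the identity $s_\theta^2=f$ gives $\|\mathcal{J}h\|^2=\int h^2 s_\theta^2\,d\mu=\int h^2 f\,d\mu=\|h\|_\theta^2<\infty$, so $\mathcal{J}h\in L^2(\mu)$; the same computation applied to a product of two functions yields $\langle\mathcal{J}h,\mathcal{J}g\rangle=\int hg\,f\,d\mu=\langle h,g\rangle_\theta$ for all $h,g\in L^2(P_\theta)$. Specializing to $(h,g)=(Y_i,Y_j)$ and to $(h,g)=(Z_0,Y_i)$ produces the two asserted identities, and in particular shows that the Gram matrix $G^Y$ and the cross-moment vector $b$ driving the bound in Theorem \ref{thm:ps-L2P} are unchanged when read off in $L^2(\mu)$ from the images $\widetilde{Y}_i$ and $\widetilde{Z}_0$.

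Next I would promote this to the stated projection identity. Writing the orthogonal decomposition $Z_0=P+R$ with $P:=\operatorname{Proj}^\theta_{\mathcal{T}^Y_k}(Z_0)\in\mathcal{T}^Y_k$ and $R$ orthogonal to $\mathcal{T}^Y_k$ in $L^2(P_\theta)$, I apply $\mathcal{J}$ to obtain $\widetilde{Z}_0=\mathcal{J}P+\mathcal{J}R$. Linearity gives $\mathcal{J}P\in\widetilde{\mathcal{T}}_k=\mathcal{J}(\mathcal{T}^Y_k)$, while the inner-product preservation from the previous step makes $\mathcal{J}R$ orthogonal in $L^2(\mu)$ to each $\widetilde{Y}_i$, hence to all of $\widetilde{\mathcal{T}}_k$. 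Uniqueness of the orthogonal decomposition in $L^2(\mu)$ then identifies $\operatorname{Proj}_{\widetilde{\mathcal{T}}_k}(\widetilde{Z}_0)=\mathcal{J}P$, and taking squared norms gives $\|\operatorname{Proj}_{\widetilde{\mathcal{T}}_k}(\widetilde{Z}_0)\|^2=\|\mathcal{J}P\|^2=\|P\|_\theta^2=\mathcal{B}^{(k)}$, which is precisely the final claim.

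I do not anticipate any serious obstacle: strict positivity of $f$ ensures $\mathcal{J}$ is a genuine isometry (in fact a unitary bijection with inverse $u\mapsto u/s_\theta$), so inner products, orthogonality, the linear independence of the $Y_i$, and hence projections all correspond exactly. The one step deserving care is the projection-intertwining, where one must appeal to uniqueness of the orthogonal decomposition rather than argue componentwise; once that is granted, the equivalence of Theorem \ref{thm:ps-L2P} with its $L^2(\mu)$ formulation is automatic, since every object in that statement ($\sigma^2$, $b$, $G^Y$, $B^{(m)}$, and the bounds $\mathcal{B}^{(k)}$) is preserved by $\mathcal{J}$.
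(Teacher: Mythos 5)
Your proposal is correct and takes essentially the same route as the paper: the key computation $\langle\mathcal J h_1,\mathcal J h_2\rangle=\int h_1h_2\,s_\theta^2\,d\mu=\langle h_1,h_2\rangle_\theta$ is exactly the paper's one-line verification, from which it concludes that inner products and projections are preserved. Your additional care in spelling out the projection-intertwining via uniqueness of the orthogonal decomposition is a sound elaboration of what the paper leaves implicit.
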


\begin{proof}
The verification is immediate:
\[
\langle\mathcal J h_1,\mathcal J h_2\rangle=\int h_1(x)h_2(x)s_\theta(x)^2\,d\mu(x)=\int h_1(x)h_2(x)f(x;\theta)\,d\mu(x)=\langle h_1,h_2\rangle_{\theta}.
\]
Thus inner products and orthogonal projections are preserved under \(\mathcal J\), establishing the equivalence.
\end{proof}

\begin{example}[Saturation at \( m = 3 \) in a symmetric quartic location model]
\label{ex:sat3}
Consider the location model with density:
\[
f(x; \theta) = \frac{1}{Z} \exp\left( - (x - \theta)^4 \right), \quad x \in \mathbb{R}
\]
where the normalization constant is
\(
Z = \int_{-\infty}^{\infty} \exp(-u^4)\, du
\)
via the change of variable \( u = x - \theta \). This model is symmetric, smooth, and lies outside the exponential family. We work in the Hilbert space \(L^2(P_\theta)\).

Let \( X \sim f(x;\theta) \) and define the centered estimator:
\[
Z_0 = T(X) - \theta = X - \theta
\]

The first three (raw) score functions are:
\[
Y_1(x) = \frac{\partial}{\partial \theta} \log f(x;\theta) = 4(x - \theta)^3, \quad
Y_2(x) = \frac{\partial^2}{\partial \theta^2} \log f(x;\theta) = -12(x - \theta)^2
\]
\[
Y_3(x) = \frac{\partial^3}{\partial \theta^3} \log f(x;\theta) = 24(x - \theta)
\]

We compute the bounds:
\[
\mathcal{B}^{(1)} = \frac{ \langle Z_0, Y_1 \rangle_\theta^2 }{ \langle Y_1, Y_1 \rangle_\theta }, \quad
\mathcal{B}^{(2)} = 
\begin{bmatrix}
\langle Z_0, Y_1 \rangle_\theta & \langle Z_0, Y_2 \rangle_\theta
\end{bmatrix}
\cdot
\begin{bmatrix}
\langle Y_1, Y_1 \rangle_\theta & \langle Y_1, Y_2 \rangle_\theta \\
\langle Y_1, Y_2 \rangle_\theta & \langle Y_2, Y_2 \rangle_\theta
\end{bmatrix}^{-1}
\cdot
\begin{bmatrix}
\langle Z_0, Y_1 \rangle_\theta \\
\langle Z_0, Y_2 \rangle_\theta
\end{bmatrix}
\]
\[
\mathcal{B}^{(3)} = b^\top (G^Y)^{-1} b, \quad \text{with } b = 
\begin{bmatrix}
\langle Z_0, Y_1 \rangle_\theta \\
\langle Z_0, Y_2 \rangle_\theta \\
\langle Z_0, Y_3 \rangle_\theta
\end{bmatrix}, \quad
G^Y = 
\begin{bmatrix}
\langle Y_1, Y_1 \rangle_\theta & \langle Y_1, Y_2 \rangle_\theta & \langle Y_1, Y_3 \rangle_\theta \\
\langle Y_1, Y_2 \rangle_\theta & \langle Y_2, Y_2 \rangle_\theta & \langle Y_2, Y_3 \rangle_\theta \\
\langle Y_1, Y_3 \rangle_\theta & \langle Y_2, Y_3 \rangle_\theta & \langle Y_3, Y_3 \rangle_\theta
\end{bmatrix}.
\]

\textit{Symmetry-induced orthogonality.} Since the model is symmetric around \( \theta \), and \( Z_0, Y_1 \) are odd functions while \( Y_2 \) is even, it follows that:
\[
\langle Z_0, Y_2 \rangle_\theta = 0, \quad \langle Y_1, Y_2 \rangle_\theta = 0
\]
Therefore, we get:
\[
\mathcal{B}^{(2)} = \mathcal{B}^{(1)} < \operatorname{Var}_\theta[T]
\]

\textit{Third-order saturation.} Since \( Y_3(x) = 24(x - \theta) = 24 Z_0(x) \), we have:
\[
\langle Z_0, Y_3 \rangle_\theta = 24 \cdot \operatorname{Var}_\theta[T]
\]
and \( Z_0 \in \operatorname{span}\{Y_1, Y_3\} \), implying:
\[
\mathcal{B}^{(3)} = \operatorname{Var}_\theta[T]
\]

\textit{Numerical values.} Using numerical integration under the base measure \( p(u) = \frac{1}{Z} e^{-u^4} \), we obtain:
\[
\mathcal{B}^{(1)} = \mathcal{B}^{(2)} = 0.24656,\qquad
\mathcal{B}^{(3)} = \operatorname{Var}_\theta[T] = 0.33799
\]

This example demonstrates strict improvement of the lower bound at \( m = 3 \) and confirms saturation of the bound sequence at that order.
\end{example}

\subsection{Curvature-corrected Bhattacharrya-type bound, \(m>1\)}
We now state and prove the curvature-corrected bound, essentially our Bhattacharyya-type bound, formulated relative to the jet subspace \(\mathcal T_m\), and \textit{not} \(\widetilde{\mathcal T}_m\), since this really is the natural generalization of the \(m=1\) case (see also Remark \ref{rem:l2}). Thus we work in the fixed Hilbert space \(L^2(\mu)\) and the jet subspace \(\mathcal T_m\). The geometric refinement we use is again similar in spirit to the \(m=1\) scenario: project the estimator residual onto the jet space, then project the residual onto the curvature span inside the normal bundle. 

\begin{theorem}[Curvature-corrected variance bound, \(m\ge1\)]\label{thm:curvature-corrected}
Let \(T(X)\) be an unbiased estimator of \(\theta\) and \(Z_0=T(X)-\theta\). Define the centered error \(\widetilde Z_0:=Z_0 s_\theta\in L^2(\mu)\), where \(s_\theta=\sqrt{f(\cdot,\theta)}\). For \(k\ge1\), consider \(\eta_k=\partial_\theta^k s_\theta\) and the jet subspace
\[
\mathcal T_m = \operatorname{span}\{\eta_1,\dots,\eta_m\}\subset L^2(\mu).
\]
Consider the second fundamental vector
\[
\mathrm{II}_m:=\eta_{m+1} - \sum_{k=1}^m \Gamma^{\mathrm{proj}}_{k}\,\eta_k,
\]
where the row vector \(\Gamma^{\mathrm{proj}}=v G^{-1}\) with the matrix \(G_{ij}=\langle\eta_i,\eta_j\rangle\) and the row vector \(v_{j}=\langle\eta_{m+1},\eta_j\rangle\).  Define the curvature (normal) span
\[
\mathcal N_m := \operatorname{span}\{\mathrm{II}_m\}\subset \mathcal T_m^\perp.
\]
Then the following curvature-corrected variance bound holds:
\begin{equation}
\label{eq:curv-corr}
\operatorname{Var}_\theta[T] \ge \mathcal B^{(m)}_{\mathrm{jet}} + \|\operatorname{Proj}_{\mathcal N_m}(\widetilde Z_0)\|^2=:\mathcal{C}^{(m)}=\mathcal B^{(m+1)}_{\mathrm{jet}},
\end{equation}
where \(\mathcal B^{(m)}_{\mathrm{jet}}:=\|\operatorname{Proj}_{\mathcal T_m}\widetilde Z_0\|^2\) is the projection bound relative to the jet subspace.  Equality in \eqref{eq:curv-corr} holds iff \(R_m:=\widetilde{Z}_0 - \operatorname{Proj}_{\mathcal T_m}\widetilde{Z}_0\in\mathcal N_m\).
\end{theorem}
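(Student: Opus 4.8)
The plan is to mirror, essentially verbatim, the two-step argument of Theorem~\ref{thm:m1-corrected}, replacing the tangent line $\mathcal T_1$ by the $m$-dimensional jet space $\mathcal T_m$ and the scalar second fundamental form $\mathrm{II}(\eta_1,\eta_1)$ by the normal vector $\mathrm{II}_m$. First I would orthogonally decompose the ambient error vector as $\widetilde Z_0 = \operatorname{Proj}_{\mathcal T_m}\widetilde Z_0 + R_m$ with $R_m\in\mathcal T_m^\perp$. Since $\|\widetilde Z_0\|^2 = \int Z_0^2 s_\theta^2\,d\mu = \mathbb E_\theta[(T-\theta)^2]$ and unbiasedness gives $\mathbb E_\theta[T]=\theta$, the left-hand side is exactly $\operatorname{Var}_\theta[T]$. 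The Pythagorean theorem in $L^2(\mu)$ then yields $\operatorname{Var}_\theta[T] = \mathcal B^{(m)}_{\mathrm{jet}} + \|R_m\|^2$, where $\mathcal B^{(m)}_{\mathrm{jet}} = \|\operatorname{Proj}_{\mathcal T_m}\widetilde Z_0\|^2$ is the jet projection bound. Unlike the $m=1$ case, I would not attempt a closed form for $\mathcal B^{(m)}_{\mathrm{jet}}$; it is kept symbolic as the quadratic form $b^\top G^{-1}b$ with $b_j=\langle\widetilde Z_0,\eta_j\rangle$, which already suffices.

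The second step lower-bounds the residual $\|R_m\|^2$ by its component along the curvature direction. Here the crucial structural fact, supplied by the construction \eqref{eq:II-components}, is that $\mathrm{II}_m \in \mathcal T_m^\perp$; thus $R_m$ and $\mathrm{II}_m$ lie in the \emph{same} subspace $\mathcal T_m^\perp$, and the elementary one-dimensional projection inequality there gives $\|R_m\|^2 \ge \langle R_m,\mathrm{II}_m\rangle^2/\|\mathrm{II}_m\|^2 = \|\operatorname{Proj}_{\mathcal N_m} R_m\|^2$, with equality iff $R_m$ is collinear with $\mathrm{II}_m$, i.e. $R_m\in\mathcal N_m$. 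To express the bound through the full error vector rather than its residual, I would use that the tangential part $\operatorname{Proj}_{\mathcal T_m}\widetilde Z_0 \in \mathcal T_m$ is orthogonal to $\mathrm{II}_m$, so that $\langle R_m,\mathrm{II}_m\rangle = \langle \widetilde Z_0,\mathrm{II}_m\rangle$ and hence $\operatorname{Proj}_{\mathcal N_m}R_m = \operatorname{Proj}_{\mathcal N_m}\widetilde Z_0$. Chaining these yields $\operatorname{Var}_\theta[T] \ge \mathcal B^{(m)}_{\mathrm{jet}} + \|\operatorname{Proj}_{\mathcal N_m}\widetilde Z_0\|^2$, which is \eqref{eq:curv-corr}, together with the stated equality characterization.

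There is genuinely little difficulty beyond bookkeeping, since the geometry was already isolated in the preliminaries; the two points I would verify with care are as follows. First, I must ensure $\mathrm{II}_m\neq 0$, so that $\mathcal N_m$ is a bona fide one-dimensional span and $\operatorname{Proj}_{\mathcal N_m}$ is well defined; this is exactly the content of the linear independence hypothesis in Assumption~\ref{ass:regularity}, which guarantees $\eta_{m+1}\notin\mathcal T_m$ and hence $\mathrm{II}_m = \eta_{m+1}-\operatorname{Proj}_{\mathcal T_m}\eta_{m+1}\neq 0$. Second, the replacement $\langle R_m,\mathrm{II}_m\rangle = \langle\widetilde Z_0,\mathrm{II}_m\rangle$ hinges on $\mathrm{II}_m\perp\mathcal T_m$, so I would cite \eqref{eq:II-components} rather than re-derive it. If any step is the ``main obstacle,'' it is conceptual rather than technical: recognizing that the single curvature vector $\mathrm{II}_m$ — although only one of infinitely many normal directions spanning $\mathcal T_m^\perp$ — is the natural choice that both specializes to the $m=1$ result and admits the Fa\`a di Bruno/Bell-polynomial expansion exploited later; the correction is of course not tight in general unless $R_m$ aligns with $\mathrm{II}_m$, which is precisely the recorded equality condition.
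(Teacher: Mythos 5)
Your proposal is correct and follows essentially the same route as the paper: orthogonal decomposition of \(\widetilde Z_0\) relative to \(\mathcal T_m\), then a further projection of the residual \(R_m\) onto the one-dimensional span \(\mathcal N_m\subset\mathcal T_m^\perp\), with the identity \(\operatorname{Proj}_{\mathcal N_m}R_m=\operatorname{Proj}_{\mathcal N_m}\widetilde Z_0\) and the same equality characterization. Your added check that \(\mathrm{II}_m\neq 0\) via the linear independence in Assumption~\ref{ass:regularity} is a small point of rigor the paper leaves implicit, but the argument is otherwise identical.
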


\begin{proof}
Start with the orthogonal decomposition \(\widetilde Z_0=\operatorname{Proj}_{\mathcal T_m}\widetilde Z_0 + R_m\), with \(R_m\in\mathcal T_m^\perp\).  The normal space \(\mathcal T_m^\perp\) contains the curvature span \(\mathcal N_m\) by construction, so we may orthogonally decompose the residual \(R_m\) with respect to \(\mathcal N_m\):
\[
R_m = \operatorname{Proj}_{\mathcal N_m}R_m + \bigl(R_m-\operatorname{Proj}_{\mathcal N_m}R_m\bigr),
\]
with the two terms orthogonal. Taking squared norms and summing orthogonal contributions yields\\
\(
\|\widetilde Z_0\|^2\)\[ = \|\operatorname{Proj}_{\mathcal T_m}\widetilde Z_0\|^2 + \|\operatorname{Proj}_{\mathcal N_m}R_m\|^2 + \|R_m-\operatorname{Proj}_{\mathcal N_m}R_m\|^2
\ge \|\operatorname{Proj}_{\mathcal T_m}\widetilde Z_0\|^2 + \|\operatorname{Proj}_{\mathcal N_m}R_m\|^2,
\]
which is \eqref{eq:curv-corr}.  Note that \(\operatorname{Proj}_{\mathcal N_m}R_m=\operatorname{Proj}_{\mathcal N_m}\widetilde Z_0\), since \(\operatorname{Proj}_{\mathcal T_m}\widetilde Z_0\perp\mathcal N_m\).  Equality occurs iff the orthogonal remainder \(R_m-\operatorname{Proj}_{\mathcal N_m}R_m\) vanishes, i.e. \(R_m\in\mathcal N_m\).
\end{proof}

\begin{remark}
There are two finite-dimensional subspaces inside \(L^2(\mu)\):
\[
\widetilde{\mathcal{T}}_k=\operatorname{span}\{\widetilde{Y}_1,\dots,\widetilde{Y}_k\},\qquad
\mathcal T_k=\operatorname{span}\{\eta_1,\dots,\eta_k\},
\]
which are not the same for \(k>1\). The Bhattacharyya-type projection bounds usually considered in the literature correspond to projecting the estimator residual onto \(\widetilde{\mathcal{T}}_k\) (equivalently, projecting \(Z_0\) onto \(\mathcal T_k^{Y}\) in \(L^2(P_\theta)\)). That projection yields \(\mathcal B^{(k)}_{\mathrm{score}}= \|\operatorname{Proj}_{\widetilde{\mathcal{T}}_k}\widetilde Z_0\|^2\).\\
Interpretation of incorporating higher-order derivatives in a Bhattacharyya-type bound as curvature corrections requires projecting onto the jet subspace \(\mathcal T_k=\operatorname{span}\{\eta_1,\dots,\eta_k\}\), which is the natural subspace to be considered with the induced connection and second fundamental vectors.  Even though \(\mathcal T_k, \,\widetilde{\mathcal{T}}_k\) are both \(k\)-dimensional subspaces under assumptions of linear independence, in general \(\mathcal T_k\neq\widetilde{\mathcal{T}}_k\) for \(k\ge2\), so \(\mathcal B^{(k)}_{\mathrm{jet}}\) and \(\mathcal B^{(k)}_{\mathrm{score}}\) are different quantities. Only when the linear spans exactly coincide do the two formulations agree.
The one-dimensional spaces \(\operatorname{span}\{\widetilde{Y}_1\}\) and \(\operatorname{span}\{\eta_1\}\) coincide because \(\eta_1=\tfrac12\widetilde{Y}_1\), which is why the classical CRB is unaffected by the choice. Nevertheless, it is obvious that our result in Theorem \ref{thm:curvature-corrected} offers a curvature-aware refinement of \(\mathcal B^{(m)}_{\mathrm{score}}\) when \(\mathcal B^{(m)}_{\mathrm{jet}}=\mathcal B^{(m)}_{\mathrm{score}}\) with \(m>1\). Can we compare them in the general scenario where \(\mathcal T_k\neq\widetilde{\mathcal{T}}_k\) for \(k\ge2\)? This is what we seek to answer in the next section.
\end{remark}

\subsection{Improving over Bhattacharyya-type bounds \(\mathcal B^{(m)}_{\mathrm{score}}\)}
\label{sec:tt}
What we proved in Theorem \ref{thm:curvature-corrected} is also a Bhattacharyya-type bound with the jets \(\eta_k\), which was interesting for the reason that, in the sequence of bounds \(\mathcal{B}_{\mathrm{jet}}^{(1)}\leq \mathcal{B}_{\mathrm{jet}}^{(2)}\leq \mathcal{B}_{\mathrm{jet}}^{(3)}\leq \dots\) (\(\mathcal{B}_{\mathrm{jet}}^{(1)}\) equalling the CRB), we get \[ \mathcal{B}_{\mathrm{jet}}^{(m+1)}-\mathcal{B}_{\mathrm{jet}}^{(m)}=\|\operatorname{Proj}_{\mathcal N_m}(\widetilde Z_0)\|^2,\] where the right side is precisely the curvature correction beyond order \(m\). An interesting question is: Staying with derivatives of order \(m\) and not considering the next higher derivative, can we demonstrate improvement over Bhattacharyya-type bounds that use the log-likelihood derivatives, i.e. \(\mathcal B^{(m)}_{\mathrm{score}}\)? For this purpose, we argue that it is beneficial to consider the structural relationship between the \(\eta_j\) and \(\widetilde{Y}_j\), subsequently incorporating normal directions in \(\mathcal T_m^\perp\) other than the single normal vector in \(\mathrm{span}\{\Pi_m\}\), which uses the \((m+1)\)-th order derivative. That is, our goal here is ultimately to compare the subspaces \(\mathcal{T}_m\) and \(\widetilde{\mathcal{T}}_m\) for \(m>1\). \\We start by noting the following, most of which we met earlier.
\paragraph{Setup and notation.}
Fix \(m> 1\). Let \(s_\theta\in L^2(\mu)\) be the square root embedding of the model and write
\(
\widetilde{Y}_j = Y_j\,s_\theta,\quad \eta_j = \partial_\theta^j s_\theta,\quad j\ge 1.
\)
Consider the jet subspace and the score-image span
\[
\mathcal T_m = \operatorname{span}\{\eta_1,\dots,\eta_m\},\qquad
\widetilde{\mathcal{T}}_m = \operatorname{span}\{\widetilde{Y}_1,\dots,\widetilde{Y}_m\}.
\]
For the purposes of this section, introduce compact notation for the projection operators \(P^\top:=\operatorname{Proj}_{\mathcal T_m}\), \(P^\perp:=\operatorname{Proj}_{\mathcal T_m^\perp}\).

By Faà di Bruno/Bell polynomials (cf. \eqref{eq:eta-bell}, and also \eqref{eq:eta1}, \eqref{eq:eta2}, \eqref{eq:eta3}) we have the jet decomposition
\begin{equation}\label{eq:etaPhiR}
\eta_j \;=\; \tfrac12\,\widetilde{Y}_j \;+\; W_j,\qquad j\ge2,
\end{equation}
where each remainder \(W_j\) is a finite linear combination of products (monomials) in the lower-order \(\widetilde{Y}_\ell\)'s (\(\ell<j\)). Set
\[
W_j^\perp := P^\perp W_j,\qquad j\ge2,
\]
the normal part of the remainder.

\paragraph{Augmented normal span.}
To capture normal directions produced by products in the Bell expansion define, for \(m\geq 2\), the augmented normal span
\begin{equation}\label{eq:Naug-single}
\mathcal N_m^{\mathrm{aug}} \;:=\; \operatorname{span}\{ W_2^\perp,\dots,W_m^\perp\}
\;\subseteq\; \mathcal T_m^\perp.
\end{equation}

\begin{lemma}[Containment of score span]
\label{lem:single-Naug-containment}
For every \(m\ge2\), we have the subspace inclusion
\begin{equation}\label{eq:S-in-TplusNaug}
\widetilde{\mathcal{T}}_m \;\subseteq\; \mathcal T_m \oplus \mathcal N_m^{\mathrm{aug}}.
\end{equation}
\end{lemma}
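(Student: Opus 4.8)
The plan is to verify the inclusion generator-by-generator: since $\widetilde{\mathcal{T}}_m$ is the span of $\widetilde{Y}_1,\dots,\widetilde{Y}_m$ and the right-hand side of \eqref{eq:S-in-TplusNaug} is a subspace, it suffices to show that each score-image $\widetilde{Y}_j$ with $1\le j\le m$ lies in $\mathcal T_m\oplus\mathcal N_m^{\mathrm{aug}}$. (Here the sum is genuinely direct because $\mathcal N_m^{\mathrm{aug}}\subseteq\mathcal T_m^\perp$, so containment in $\mathcal T_m+\mathcal N_m^{\mathrm{aug}}$ is all that is at issue.) The base case $j=1$ is immediate from the identity $\eta_1=\tfrac12\widetilde{Y}_1$, which gives $\widetilde{Y}_1=2\eta_1\in\mathcal T_m$.

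For the range $2\le j\le m$, I would invoke the jet decomposition \eqref{eq:etaPhiR}, namely $\eta_j=\tfrac12\widetilde{Y}_j+W_j$, and solve for the score-image as $\widetilde{Y}_j=2\eta_j-2W_j$. The term $\eta_j$ already lies in $\mathcal T_m$ because $j\le m$, so it remains only to place $W_j$ inside $\mathcal T_m\oplus\mathcal N_m^{\mathrm{aug}}$. I would do this by splitting $W_j$ along the orthogonal decomposition $L^2(\mu)=\mathcal T_m\oplus\mathcal T_m^\perp$: writing $W_j=P^\top W_j+W_j^\perp$, the tangential part $P^\top W_j$ lies in $\mathcal T_m$ by definition of the projection, while the normal part $W_j^\perp=P^\perp W_j$ is, by construction \eqref{eq:Naug-single}, one of the spanning vectors of $\mathcal N_m^{\mathrm{aug}}$ (precisely because $2\le j\le m$). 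Substituting back, $\widetilde{Y}_j=2\eta_j-2P^\top W_j-2W_j^\perp$ exhibits $\widetilde{Y}_j$ as the sum of an element of $\mathcal T_m$ and an element of $\mathcal N_m^{\mathrm{aug}}$, which is exactly what is required.

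The crux of the argument, and the only place where anything beyond bookkeeping enters, is the decomposition \eqref{eq:etaPhiR} itself, i.e. the assertion that each remainder $W_j$ involves \emph{only} the lower-order scores $Y_\ell$ with $\ell<j$ (so that $W_j^\perp$ is a legitimate, pre-declared generator of $\mathcal N_m^{\mathrm{aug}}$). This is exactly the output of the Fa\`a di Bruno / Bell-polynomial expansion \eqref{eq:eta-bell}: the unique term of $B_j\!\bigl(\tfrac12 Y_1,\dots,\tfrac12 Y_j\bigr)$ that is linear in the top score $Y_j$ is the partial Bell contribution $B_{j,1}=\tfrac12 Y_j$, and every other term corresponds to a set partition of $\{1,\dots,j\}$ into at least two blocks, hence to a monomial in $Y_1,\dots,Y_{j-1}$ alone; multiplying by $s_\theta$ gives $W_j$ as a finite combination of such products. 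Once this structural fact is in hand the inclusion follows immediately, and I do not anticipate a genuine obstacle. It is worth noting that the vector $\Pi_m$ plays no role in this particular containment and is retained in the spanning set of $\mathcal N_m^{\mathrm{aug}}$ only for the sharper statements developed in Proposition \ref{prop:single-Naug-relations}.
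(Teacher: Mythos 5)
Your argument is correct and is essentially the paper's own proof: both reduce to the identity $\widetilde{Y}_j=2\eta_j-2W_j$ from \eqref{eq:etaPhiR}, use $\eta_j\in\mathcal T_m$ for $j\le m$, and split off the normal part $W_j^\perp$, which is by definition a generator of $\mathcal N_m^{\mathrm{aug}}$. Your added justification of \eqref{eq:etaPhiR} via the Bell-polynomial structure is a harmless elaboration of a fact the paper states in its setup, not a different route.
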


\begin{proof}
From \eqref{eq:etaPhiR} we get \(\widetilde{Y}_j=2\eta_j-2W_j\). Apply the orthogonal decomposition relative to \(\mathcal T_m\):
\[
\widetilde{Y}_j = P^\top\widetilde{Y}_j + P^\perp\widetilde{Y}_j = P^\top\widetilde{Y}_j - 2W_j^\perp,
\]
since \(P^\perp\eta_j=0\) for \(j\le m\). It is immediate that \(\widetilde{Y}_j\in\mathcal T_m\oplus\mathcal N_m^{\mathrm{aug}}\) for every \(j\le m\) (of course, \(\widetilde{Y}_1\in \mathcal T_m,\,\forall m\geq 1\)), and linear combinations of the \(\widetilde{Y}_j\)'s (i.e. \(\widetilde{\mathcal{T}}_m\)) belong to \(\mathcal T_m\oplus\mathcal N_m^{\mathrm{aug}}\), proving the containment.
\end{proof}

\begin{proposition}[Universal comparison with augmented normal span]
\label{prop:single-Naug-compare}
Let \(\widetilde{Z}_0=Z_0 s_\theta\in L^2(\mu)\), with notation as before. Then, for \(m\geq 2\),
\begin{equation}\label{eq:Caug-Bscore}
\mathcal C_{\mathrm{aug}}^{(m)} \;:=\; \big\|P^\top \widetilde{Z}_0\big\|^2 \;+\; \big\|\operatorname{Proj}_{\mathcal N_m^{\mathrm{aug}}}\widetilde{Z}_0\big\|^2
\;=\; \big\|\operatorname{Proj}_{\mathcal T_m\oplus\mathcal N_m^{\mathrm{aug}}}\widetilde{Z}_0\big\|^2
\;\ge\; \big\|\operatorname{Proj}_{\widetilde{\mathcal{T}}_m}\widetilde{Z}_0\big\|^2 \;=\; \mathcal B_{\mathrm{score}}^{(m)}.
\end{equation}
Equality holds iff \(\operatorname{Proj}_{(\mathcal T_m\oplus\mathcal N_m^{\mathrm{aug}})\ominus \widetilde{\mathcal{T}}_m}\widetilde{Z}_0=0\).
\end{proposition}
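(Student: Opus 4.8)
The plan is to reduce the entire statement to two elementary Hilbert-space facts---the Pythagorean identity for orthogonal direct sums and the monotonicity of the projection norm under subspace inclusion---with Lemma \ref{lem:single-Naug-containment} supplying the one genuinely model-specific ingredient. First I would establish the leftmost equality. By the definition \eqref{eq:Naug-single}, $\mathcal N_m^{\mathrm{aug}}\subseteq\mathcal T_m^\perp$, so $\mathcal T_m$ and $\mathcal N_m^{\mathrm{aug}}$ are mutually orthogonal and their sum is a genuine orthogonal direct sum. Consequently the projection onto $\mathcal T_m\oplus\mathcal N_m^{\mathrm{aug}}$ splits as $\operatorname{Proj}_{\mathcal T_m\oplus\mathcal N_m^{\mathrm{aug}}}\widetilde Z_0=P^\top\widetilde Z_0+\operatorname{Proj}_{\mathcal N_m^{\mathrm{aug}}}\widetilde Z_0$, and since the two summands are orthogonal, Pythagoras gives $\|\operatorname{Proj}_{\mathcal T_m\oplus\mathcal N_m^{\mathrm{aug}}}\widetilde Z_0\|^2=\|P^\top\widetilde Z_0\|^2+\|\operatorname{Proj}_{\mathcal N_m^{\mathrm{aug}}}\widetilde Z_0\|^2=\mathcal C_{\mathrm{aug}}^{(m)}$, which is precisely the first equality.

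Next, to obtain the inequality I would invoke Lemma \ref{lem:single-Naug-containment}, which gives the inclusion $\widetilde{\mathcal T}_m\subseteq\mathcal T_m\oplus\mathcal N_m^{\mathrm{aug}}$. Writing $W:=\mathcal T_m\oplus\mathcal N_m^{\mathrm{aug}}$ and $V:=\widetilde{\mathcal T}_m$ (both finite-dimensional, hence closed), I would decompose the projection of $\widetilde Z_0$ onto the larger space relative to the smaller, $\operatorname{Proj}_W\widetilde Z_0=\operatorname{Proj}_V\widetilde Z_0+\operatorname{Proj}_{W\ominus V}\widetilde Z_0$, where $W\ominus V$ denotes the orthogonal complement of $V$ inside $W$. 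These two pieces are orthogonal, so Pythagoras yields $\|\operatorname{Proj}_W\widetilde Z_0\|^2=\|\operatorname{Proj}_V\widetilde Z_0\|^2+\|\operatorname{Proj}_{W\ominus V}\widetilde Z_0\|^2\ge\|\operatorname{Proj}_V\widetilde Z_0\|^2=\mathcal B_{\mathrm{score}}^{(m)}$, which is the claimed inequality.

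The equality condition then falls out of the same decomposition: since $\|\operatorname{Proj}_{W\ominus V}\widetilde Z_0\|^2$ is the only nonnegative slack, equality holds if and only if it vanishes, i.e. $\operatorname{Proj}_{(\mathcal T_m\oplus\mathcal N_m^{\mathrm{aug}})\ominus\widetilde{\mathcal T}_m}\widetilde Z_0=0$, as stated. I do not anticipate a serious obstacle in the proposition itself: the substantive geometric work---tracing the score-images $\widetilde Y_j$ into $\mathcal T_m\oplus\mathcal N_m^{\mathrm{aug}}$ via the Fa\`a di Bruno/Bell remainders $W_j^\perp$---is already discharged by Lemma \ref{lem:single-Naug-containment}, and the remaining steps are standard projection identities (well-definedness of the projections being automatic from finite-dimensionality). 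The one point worth flagging is that a short dimension count, using $\dim\widetilde{\mathcal T}_m=\dim\mathcal T_m=m$ together with $\dim\mathcal N_m^{\mathrm{aug}}\ge1$ (the latter because $\Pi_m\neq0$ under the linear-independence part of Assumption \ref{ass:regularity}), shows that $\widetilde{\mathcal T}_m$ is a \emph{proper} subspace of $\mathcal T_m\oplus\mathcal N_m^{\mathrm{aug}}$; hence the slack subspace is nontrivial and the curvature-augmented bound strictly improves $\mathcal B_{\mathrm{score}}^{(m)}$ for every $\widetilde Z_0$ not orthogonal to it.
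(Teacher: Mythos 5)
Your proof is correct and follows essentially the same route as the paper's: orthogonality of \(\mathcal T_m\) and \(\mathcal N_m^{\mathrm{aug}}\) gives the Pythagorean splitting on the left, Lemma \ref{lem:single-Naug-containment} supplies the inclusion \(\widetilde{\mathcal T}_m\subseteq\mathcal T_m\oplus\mathcal N_m^{\mathrm{aug}}\), and monotonicity of projection norms under subspace inclusion yields the inequality together with the stated equality criterion. Your closing dimension-count remark (properness of the inclusion since \(\Pi_m\neq 0\) under Assumption \ref{ass:regularity}) is a correct bonus observation beyond what the proposition asserts, but does not alter the argument.
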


Notice that \(\mathcal C_{\mathrm{aug}}^{(m)}=\mathcal{B}^{(m)}_{\text{jet}}+\big\|\operatorname{Proj}_{\mathcal N_m^{\mathrm{aug}}}\widetilde{Z}_0\big\|^2\), and that one can compute \(\mathcal C_{\mathrm{aug}}^{(m)}\) only using the \(\{\eta_k\}_{k=1}^m\) offering a means to comparing bounds that can be obtained using the vectors \(\{\eta_k\}_{k=1}^m\) and \(\{\widetilde{Y}_k\}_{k=1}^m\). It is also obvious that \(\mathcal C_{\mathrm{aug}}^{(m)}\ge \max\{\mathcal{B}^{(m)}_{\text{jet}}, \mathcal{B}^{(m)}_{\text{score}}\}\).

\begin{proof}
Lemma \ref{lem:single-Naug-containment} gives \(\widetilde{\mathcal{T}}_m\subseteq\mathcal T_m\oplus\mathcal N_m^{\mathrm{aug}}\). For any closed subspaces \(V_1\subseteq V_2\) and any vector \(Z\), we have \(\|\operatorname{Proj}_{V_2} Z\|\ge\|\operatorname{Proj}_{V_1} Z\|\). Taking \(V_1= \widetilde{\mathcal{T}}_m\) and \(V_2=\mathcal T_m\oplus\mathcal N_m^{\mathrm{aug}}\) yields the inequality. Orthogonality of \(\mathcal T_m\) and \(\mathcal N_m^{\mathrm{aug}}\) gives the sum-of-squares decomposition on the left, and the equality criterion follows from standard projection geometry.
\end{proof}

\begin{proposition}[Strict improvement over \(\mathcal B_{\mathrm{score}}^{(m)}\)]
\label{prop:single-Naug-relations}
With notation as above,
 if \(\operatorname{Proj}_{\mathcal N_m^{\mathrm{aug}}}\widetilde{Z}_0\neq0\) and \(\mathcal N_m^{\mathrm{aug}}\not\subseteq \widetilde{\mathcal{T}}_m\), then strict improvement holds:
  \[
  \mathcal C_{\mathrm{aug}}^{(m)} \;>\; \mathcal B_{\mathrm{score}}^{(m)}.
  \]
\end{proposition}

\begin{proof}
By Proposition \ref{prop:single-Naug-compare}, \(\mathcal C_{\mathrm{aug}}^{(m)}=\|\operatorname{Proj}_{\mathcal T_m\oplus\mathcal N_m^{\mathrm{aug}}}\widetilde{Z}_0\|^2\ge\|\operatorname{Proj}_{\widetilde{\mathcal{T}}_m}\widetilde{Z}_0\|^2\). If \(\operatorname{Proj}_{\mathcal N_m^{\mathrm{aug}}}\widetilde{Z}_0\neq0\) and \(\mathcal N_m^{\mathrm{aug}}\not\subseteq \widetilde{\mathcal{T}}_m\), then the orthogonal complement \((\mathcal T_m\oplus\mathcal N_m^{\mathrm{aug}})\ominus\widetilde{\mathcal{T}}_m\) is nontrivial and contains a component of \(\operatorname{Proj}_{\mathcal N_m^{\mathrm{aug}}}\widetilde{Z}_0\); therefore projection onto the larger space strictly increases the squared norm and \(\mathcal C_{\mathrm{aug}}^{(m)}>\mathcal B_{\mathrm{score}}^{(m)}\).
\end{proof}

\begin{remark}
\label{rem:new}
It is worth repeating that, since the augmented normal space is defined as \(
 \operatorname{span}\{W_2^\perp,\dots,W_m^\perp\}
\;\subseteq\; \mathcal T_m^\perp,
\) \emph{without} the \(\Pi_m\) (cf. \eqref{eq:Naug-single}), we are not ``cheating" by considering a higher-order derivative in the comparison by including the normal vector \(\Pi_m\). In more formal words, Theorem \ref{thm:curvature-corrected}, which is our result of the nature of Bhattacharyya, showed that considering projections onto the new (and ``natural") normal direction \(\Pi_m\) is equivalent to incorporating a curvature-based correction, whereas Propositions \ref{prop:single-Naug-compare} and \ref{prop:single-Naug-relations} prove that improvement over \(m\)-th order Bhattacharyya-type bounds that use log-likelihood derivatives is possible using the specific structure of the \(\eta_j\) only upto order \(m\); specifically, we need to use the extrinsic directions \(W_2^\perp,\dots,W_m^\perp\). For the latter, it may be of interest for the reader to consider Example \ref{ex:asym-quartic-aug}.
\end{remark}

\subsection{Summary and comparison with Bhattacharyya-type bounds}
\label{sec:bhat}

The classical \emph{Bhattacharyya inequality} (\cite{b1946}; see also \cite{ZS, l1998}) generalizes the CRB by incorporating higher-order derivatives of the model density. 
The core idea here can be systematically understood within a projection framework in $L^2(P_\theta)$. 
One considers the span $\widetilde{\mathcal{T}}_m = \operatorname{span}\{\widetilde{Y}_1,\dots,\widetilde{Y}_m\}$ of score derivatives and projects the centered estimator residual $\widetilde Z_0 = (T - \theta)s_\theta$ onto $\widetilde{\mathcal{T}}_m$. 
This leads to an explicit sequence of bounds 
\[
\mathcal{B}_{\mathrm{score}}^{(m)} \;=\; \| \mathrm{Proj}_{\widetilde{\mathcal{T}}_m} \widetilde Z_0 \|^2,
\]
which coincides with bounds of the kind obtained by Bhattacharyya, but now using derivatives of the score (not \(\frac{\partial^k_\theta f}{f}\)), and recovers the CRB at $m=1$. 

In contrast, we considered refinements obtained by embedding the statistical model manifold extrinsically into the Hilbert space $L^2(\mu)$ using the square root map and exploiting the associated differential geometry. 
For $m=1$, we showed that the CRB may be refined by an explicit \emph{curvature correction} term involving the second fundamental form $\Pi(\eta_1,\eta_1)$, where $\eta_1$ is the first-order jet direction (see Theorem \ref{thm:m1-corrected}). 
This yields a strictly sharper bound whenever the estimator residual has nontrivial projection in the normal direction of the model manifold; that is, whenever we have a CRB-inefficient estimator --- considerations of this kind are not made in the non-asymptotic Bhattacharyya-type bounds. 

What we have shown may be succintly stated this way: Consider the jet space $\mathcal{T}_m = \mathrm{span}\{\eta_1,\dots,\eta_m\}$ of embedded derivatives of the statistical model. Defining, for \(m\geq 1\), \(
\mathcal{B}_{\mathrm{jet}}^{(m)} \;=\; \| \mathrm{Proj}_{\mathcal{T}_m} \widetilde Z_0 \|^2,
\)
we get the sequence of bounds \(\mathcal{B}_{\mathrm{jet}}^{(1)}\leq \mathcal{B}_{\mathrm{jet}}^{(2)}\leq \mathcal{B}_{\mathrm{jet}}^{(3)}\leq \dots\) (\(\mathcal{B}_{\mathrm{jet}}^{(1)}\) equalling the CRB), with \[ \mathcal{B}_{\mathrm{jet}}^{(m+1)}-\mathcal{B}_{\mathrm{jet}}^{(m)}=\|\operatorname{Proj}_{\mathcal N_m}(\widetilde Z_0)\|^2,\] where \(\|\operatorname{Proj}_{\mathcal N_m}(\widetilde Z_0)\|^2\) is precisely the curvature-based correction beyond order \(m\); see \eqref{eq:curv-corr}. These may be viewed as Bhattacharyya-type bounds using jets \(\eta_m\), but the interesting part is that they come with an associated geometric picture based on curvature; also see \cite{Carkri} for more differential geometric intuition.

Further, we considered how one may improve the log-likelihood based bounds \(\mathcal{B}_{\mathrm{score}}^{(m)}\) using only the \(\eta_j\) upto order \(m\). For $m>1$, we defined the augmented normal span
\[
\mathcal{N}^{\mathrm{aug}}_m = \mathrm{span}\big\{ W_2^\perp,\dots,W_m^\perp \big\},
\]
where the $W_j^\perp$ are orthogonal components arising from Faà~di~Bruno expansions of the embedding to facilitate comparing \(\mathcal{T}_m\) with \(\widetilde{\mathcal{T}}_m\). 
Our bound then reads
\[
\mathcal{C}_{\mathrm{aug}}^{(m)} 
\;=\; \big\| \mathrm{Proj}_{\mathcal{T}_m \oplus \mathcal{N}^{\mathrm{aug}}_m}\, \widetilde Z_0 \big\|^2,
\]
and we established that 
\[
\mathcal{C}_{\mathrm{aug}}^{(m)} =\mathcal{B}^{(m)}_{\text{jet}}+\big\|\operatorname{Proj}_{\mathcal N_m^{\mathrm{aug}}}\widetilde{Z}_0\big\|^2\;\ge\; \mathcal{B}_{\mathrm{score}}^{(m)},
\]
with strict inequality in generic curved models (see sufficient conditions stated in Proposition \ref{prop:single-Naug-relations}). 
Thus, the classical Bhattacharyya-type sequence may be viewed as a \emph{score-only case}, while our geometric refinement yields systematically sharper bounds by incorporating normal/extrinsic components that lie beyond the score span. 

To conclude, our extrinsic geometry point of view proposes curvature-based refinements to the classical CRB whenever the estimator error does not purely lie in the span of the score. A Bhattacharyya-type extension of this was also shown, and it was argued that this intuitive geometric picture does not present itself when using the score functions associated with the (log)likelihood. Although second-order information, as present in the curvature via the second fundamental form, is taken into account in some sense in the second-order asymptotic Bhattacharyya-type bounds \cite{efron1975,okamoto1991asymptotic}, this does not translate to non-asymptotic variance bounds.

\paragraph{Limitations of our approach.} Having clarified our contributions, we also acknowledge the limitations. Our work largely arose from a theoretical/conceptual motivation, seeking to put forth the point that a part of the inefficiency of estimators in the sense of classical statistical bounds may be understood in terms of a refinement based on extrinsic geometry of the statistical manifold in a fixed ambient \(L^2\) space. Some of the ideas developed here will be illustrated in the analytical examples below, which raise questions regarding ease of computation of the refinements in real situations. Our submission is that the bounds might involve more involved integral computations than when using the (log)likelihood-based scores, given the form of the \(\eta_j\), but they can be approximated numerically in many examples of interest without much difficulty. In fact, we do just that in the asymmetric quartic location family example below rather than deriving the full set of formulae by hand, where we seek to illustrate Proposition \ref{prop:single-Naug-relations}; see Example \ref{ex:asym-quartic-aug}. Our Bhattacharyya-type bound using the jets associated with \(s_\theta\) equals \[
\|\operatorname{Proj}_{\mathcal{T}_m}(\widetilde{Z}_0)\|_\theta^2 = b_\eta^\top (G)^{-1} b_\eta,
\] where the \(i\)-th component of \(b_\eta\) is \(\langle \widetilde{Z}_0,\eta_i \rangle\) and \(G\) is the Gram matrix associated with the \(\{\eta_i\}_{i=1}^m\). Possibly, this is trickier to compute than the bounds using log-likelihood derivatives because each \(\eta_i\) is a polynomial in the \(\{Y_j\}_{j\leq i}\) (cf. \eqref{eq:eta-bell}), but this complexity of the \(\eta_j\) is also what leads to improved bounds in certain models such as the one in Example \ref{ex:asym-quartic-aug}. Using likelihood derivatives \(\frac{\partial^i f}{f}\) as in the original Bhattacharyya bounds is easier to handle if one has sufficient regularity such that we may differentiate under the integral with impunity. In any case, as alluded to, our main motivation is to highlight the geometric viewpoint as regards inefficiency in non-asymptotic regimes leading to our choice of fully analytical examples below that serve this purpose.

\section{Analytical examples}
We illustrate Theorems \ref{thm:m1-corrected}, \ref{thm:curvature-corrected}, and some of the key results stated in Section \ref{sec:tt} with two fully analytical examples below and one that also involves some numerics; the first two analytical examples explicitly demonstrate curvature corrections to the CRB in the event of inefficiency (i.e. \(\mathcal{B}^{(1)}_{\text{score}}=\mathcal{B}^{(1)}_{\text{jet}}\equiv\mathcal B^{(1)}\)), while the second also illuminates results comparing \(\mathcal{B}_{\mathrm{score}}^{(m)}\) with our jet subspace-based curvature correction theorems. The third is specifically used to check our claim in Proposition \ref{prop:single-Naug-relations} regarding the fruitfulness of incorporating alternative extrinsic directions in the normal span in addition to the \(\mathcal{B}_{\mathrm{jet}}^{(m)}\) facilitating comparison and improvements over \(\mathcal{B}_{\mathrm{score}}^{(m)}\).

\begin{example}[One-parameter curved normal family]
We consider the family of normal laws, where \(P_\theta\) is the distribution
\[
\mathcal N\!\bigl(\mu(\theta),\,\sigma^2(\theta)\bigr),\qquad
\mu(\theta)=\theta,\qquad \sigma^2(\theta)=1+\theta^2,\qquad \theta\in\mathbb R,
\]
with Lebesgue base measure $\mu$ and strictly positive density
\[
f(x;\theta)=\frac{1}{\sqrt{2\pi(1+\theta^2)}}\,
\exp\!\left(-\frac{(x-\theta)^2}{2(1+\theta^2)}\right).
\]
Write $B:=1+\theta^2$ and $U:=X-\theta$ so that, under $P_\theta$, we have $U\sim \mathcal N(0,B)$.
The score and its first derivative are
\begin{align}
\label{eq:Y1}
Y_1(x;\theta)&=\partial_\theta\log f(x;\theta)
= -\frac{\theta}{B}+\frac{U}{B}+\frac{\theta U^2}{B^2},
\\
\label{eq:Y2}
Y_2(x;\theta)&=\partial_\theta^2\log f(x;\theta)
= -\frac{2}{B^2}-\frac{4\theta}{B^2}U+\frac{1-3\theta^2}{B^3}U^2.
\end{align}
Recall the jet fields
\[
\eta_1=\partial_\theta s_\theta=\frac12\,Y_1\,s_\theta,\qquad
\eta_2=\partial_\theta^2 s_\theta=\frac12\Bigl(Y_2+\tfrac12 Y_1^2\Bigr)s_\theta.
\]
Consider an unbiased estimator $T(X)$ of $\theta$. Set $Z_0=T(X)-\theta$ and identify
$Z_0 s_\theta\in L^2(\mu)$. For the location-equivariant unbiased estimator $T(X)=X$, we have
$Z_0=U$ and $\operatorname{Var}_\theta[T]=\mathbb E_\theta[U^2]=B$.

Using $U\sim\mathcal N(0,B)$ and the Gaussian moment formulae
$\mathbb E_\theta[U]=0$, $\mathbb E_\theta[U^2]=B$, $\mathbb E_\theta[U^4]=3B^2$, $\mathbb E_\theta[U^6]=15B^3$, $\mathbb E_\theta[U^8]=105B^4$, one obtains:
\begin{align}
\label{eq:Itheta}
\mathcal I(\theta)=\mathbb E_\theta[Y_1^2]
&=\frac{1+3\theta^2}{B^2},
\\
\label{eq:EY1Y2}
\mathbb E_\theta[Y_1 Y_2]
&=-\,\frac{2\theta(1+5\theta^2)}{B^3},
\\
\label{eq:EY2sq}
\mathbb E_\theta[Y_2^2]
&=\frac{3+10\theta^2+43\theta^4}{B^4},
\\
\label{eq:EY1cube}
\mathbb E_\theta[Y_1^3]
&=\frac{6\theta+14\theta^3}{B^3},
\\
\label{eq:UY2}
\mathbb E_\theta[U\,Y_2]
&=-\,\frac{4\theta}{B},
\\
\label{eq:UY1sq}
\mathbb E_\theta[U\,Y_1^2]
&=\frac{4\theta}{B}.
\end{align}

The Gram matrix $G\in\mathbb R^{2\times2}$ has entries $G_{ij}=\langle \eta_i,\eta_j\rangle$:
\begin{align}
\label{eq:G11}
G_{11}&=\left\langle\frac12 Y_1 s_\theta,\frac12 Y_1 s_\theta\right\rangle
=\frac14\,\mathbb E_\theta[Y_1^2]
=\frac{1+3\theta^2}{4B^2},
\\
\label{eq:G12}
G_{12}&=\left\langle\eta_1,\eta_2\right\rangle
=\frac14\left(\mathbb E_\theta[Y_1Y_2]+\frac12\mathbb E_\theta[Y_1^3]\right)
=\frac{\theta(1-3\theta^2)}{4B^3},
\\
\label{eq:G22}
G_{22}&=\left\langle\eta_2,\eta_2\right\rangle
=\frac14\left(\mathbb E_\theta[Y_2^2]+\mathbb E_\theta[Y_2Y_1^2]+\frac14\mathbb E_\theta[Y_1^4]\right)
=\frac{19+34\theta^2+75\theta^4}{16\,B^4},
\end{align}
where we used, in addition to \eqref{eq:EY2sq}, the identities
\[
\mathbb E_\theta[Y_2 Y_1^2]
=\frac{-55\theta^4-18\theta^2+1}{B^4},
\qquad
\mathbb E_\theta[Y_1^4]
=\frac{3\,(41\theta^4+22\theta^2+1)}{B^4}.
\]

For $Z_0 s_\theta$, the coefficients against $\eta_1,\eta_2$ are
\begin{align}
\label{eq:b1b2}
a_1&:=\langle Z_0 s_\theta,\eta_1\rangle=\frac12\,\mathbb E_\theta[Z_0 Y_1]
=\frac12,
\\
a_2&:=\langle Z_0 s_\theta,\eta_2\rangle
=\frac12\left(\mathbb E_\theta[Z_0 Y_2]+\frac12\mathbb E_\theta[Z_0 Y_1^2]\right)
=-\frac{\theta}{B},
\end{align}
where we used $\mathbb E_\theta[Z_0 Y_1]=1$, and \eqref{eq:UY2}–\eqref{eq:UY1sq} with $Z_0=U$.

The CRB ($m=1$) reads
\[
\mathcal{B}^{(1)}_{\text{score}}=\mathcal{B}^{(1)}_{\text{jet}}\equiv\mathcal B^{(1)}=\frac{1}{\mathcal I(\theta)}
=\frac{B^2}{1+3\theta^2}
=\frac{(1+\theta^2)^2}{1+3\theta^2}.
\]
The Christoffel coefficient is
\[
\Gamma_{11}=\frac{\langle \eta_2,\eta_1\rangle}{\langle \eta_1,\eta_1\rangle}
=\frac{G_{12}}{G_{11}}
=\frac{-3\theta^3+\theta}{3\theta^4+4\theta^2+1}.
\]
The second fundamental form is the normal component
\[
\mathrm{II}(\eta_1,\eta_1)
=\eta_2-\Gamma_{11}\,\eta_1 \;\in\; \mathcal T_1^\perp.
\]
Its squared norm and correlation with $Z_0 s_\theta$ are
\begin{align}
\label{eq:IInorm}
\|\mathrm{II}(\eta_1,\eta_1)\|^2
&=G_{22}-\Gamma_{11}^2 G_{11}
=\frac{189\theta^6 + 201\theta^4 + 87\theta^2 + 19}
{48\theta^{10} + 208\theta^8 + 352\theta^6 + 288\theta^4 + 112\theta^2 + 16},
\\
\label{eq:ZI}
\langle Z_0 s_\theta,\mathrm{II}(\eta_1,\eta_1)\rangle
&=a_2-\Gamma_{11} a_1
=-\frac{3\theta}{2\,(3\theta^2+1)}.
\end{align}
Therefore, our curvature-corrected lower bound for $m=1$ is
\begin{equation}
\label{eq:curv-corr-m1}
\operatorname{Var}_\theta[T]
\;\ge\; \mathcal B^{(1)}
+\frac{\langle Z_0 s_\theta,\mathrm{II}(\eta_1,\eta_1)\rangle^2}
{\|\mathrm{II}(\eta_1,\eta_1)\|^2}
=\frac{B^2}{1+3\theta^2}
+\frac{36\,\theta^2\left(\theta^8+4\theta^6+6\theta^4+4\theta^2+1\right)}
{567\theta^8 + 792\theta^6 + 462\theta^4 + 144\theta^2 + 19},
\end{equation}
and we get a strictly positive curvature correction term for all \(\theta\neq 0\). Of course, when \(\theta= 0\), \(\operatorname{Var}_\theta[T]=\mathcal B^{(1)}=1\) already.
\end{example}

\begin{example}[Strict gains at $m=1$ and $m=2$]
\label{ex:scalar-C1C2-gain}

Let $s_0(x)=(2\pi)^{-1/4}e^{-x^2/4}$ be the Gaussian square root and let $H_n$ be the probabilists' Hermite polynomials with
$\langle H_m s_0, H_n s_0\rangle = \mathbb E_\varphi[H_m H_n] = n!\,\delta_{mn}$ under $\varphi\sim\mathcal N(0,1)$.

Consider the one-parameter normalized family
\[
f_\theta(x) = \frac{s_0(x)^2 \exp\big(2 a_1(\theta) H_1(x) + 2 a_2(\theta) H_2(x)\big)}{Z(\theta)},\quad
s_\theta(x) = \sqrt{f_\theta(x)}
,
\]
with
\[
a_1(\theta)=\theta,\qquad a_2(\theta)=\theta+\tfrac{c}{2}\theta^2,\qquad
Z(\theta) = \int_\mathbb{R} s_0(x)^2 \exp\big(2 a_1(\theta) H_1 + 2 a_2(\theta) H_2\big) dx.
\]

For convergence of $Z(\theta)$ (and integrability of $f_\theta$), we require $2 a_2(\theta) < 1/2$. For our local computations near $\theta=0$, \(c\) can then take values in a wide enough interval.

Write $u(\theta):=\tfrac12 \log f_\theta = a_1 H_1 + a_2 H_2 - \tfrac12 \log Z(\theta) + \log s_0$. Then the score functions are
\[
Y_1 = 2\,\partial_\theta u = 2\big(a_1' H_1 + a_2' H_2 - \tfrac12 Z'(\theta)/Z(\theta) \big),\quad
Y_2 = 2\,\partial_\theta^2 u = 2\big(a_1'' H_1 + a_2'' H_2 - \tfrac12 (Z''/Z - (Z'/Z)^2) \big).
\]

At the base point $\theta=0$, we have
\[
a_1'(0)=1,\quad a_2'(0)=1,\quad a_2''(0)=c,\quad
Z(0)=1,\quad Z'(0)=0,\quad Z''(0)=12,
\]
so that the scores are
\[
Y_1(0) = 2(H_1 + H_2), \qquad Y_2(0) = 2c H_2 - 12.
\]

The jets are
\[
\eta_1 = \partial_\theta s_\theta = \frac12 Y_1 s_\theta,\qquad
\eta_2 = \partial_\theta^2 s_\theta = \frac12 \Big(Y_2 + \frac12 Y_1^2 \Big) s_\theta.
\]

Expanding at $\theta=0$ with the Hermite product identity
\[
H_m H_n = \sum_{k=0}^{\min(m,n)} k! \binom{m}{k}\binom{n}{k} H_{m+n-2k},
\]
we obtain
\begin{align*}
\eta_1(0) &= (H_1 + H_2) s_0,\\
Y_1(0)^2 &= 4(H_1 + H_2)^2 = 4(H_1^2 + 2 H_1 H_2 + H_2^2)\\
&= 4\big[(H_2 + H_0) + 2(H_3 + 2H_1) + (H_4 + 4H_2 + 2H_0)\big] = 4(H_4+2H_3+5H_2+4H_1+3H_0),\\
\eta_2(0) &= \frac12 \Big(2c H_2 -12 + \frac12 Y_1(0)^2 \Big)s_0
= s_0 \Big(H_4 + 2H_3 + (5+c) H_2 + 4 H_1-3H_0\Big).
\end{align*}
The score-image span at order $m=2$ is
\[
\widetilde{\mathcal{T}}_2(0)=\operatorname{span}\{\widetilde{Y}_1(0),\widetilde{Y}_2(0)\}
=\operatorname{span}\{2(H_1+H_2)s_0,\; (2c\,H_2-12H_0) s_0\}.
\]
The $2$-jet span is
\[
\mathcal T_2(0)=\operatorname{span}\{\eta_1(0),\eta_2(0)\}
=\operatorname{span}\Big\{(H_1+H_2)s_0,\;\big(H_4 + 2H_3 + (5+c)H_2 + 4H_1-3H_0 \big)s_0\Big\}.
\]

Define the unbiased estimator $T(X)=H_3(X)$. Then $Z_0:=T-\mathbb E_\theta[H_3(X)]$ satisfies, at $\theta=0$,
\[
Z_0\,s_0 \;=\; H_3 s_0.
\]
At $m=1$, the score span is $\widetilde{\mathcal{T}}_1(0)=\operatorname{span}\{\widetilde{Y}_1(0)\}=\operatorname{span}\{(H_1{+}H_2)s_0\}$, hence
\[
\mathcal B^{(1)}_{\text{score}}=\mathcal B^{(1)}_{\text{jet}}\equiv \mathcal B^{(1)}=\big\|\operatorname{Proj}_{\widetilde{\mathcal{T}}_1(0)}(H_3 s_0)\big\|^2=0,
\]
since $H_3\perp H_1,H_2$. Recall again that
\[
\eta_1=(H_1+H_2)s_0,\qquad
\eta_2=(H_4+2H_3+(5+c)H_2+4H_1-3H_0)s_0.
\]
The projection coefficient is
\[
\Gamma_{11} \;=\; \frac{\langle \eta_2,\eta_1\rangle}{\langle \eta_1,\eta_1\rangle}
,
\]
so that
\[
\Pi_1 \;=\; \eta_2 - \Gamma_{11}\eta_1
= s_0\Big( H_4 + 2H_3 + \tfrac{1+c}{3}H_2 - \tfrac{2(1+c)}{3}H_1-3H_0\Big).
\]

The inner product with $H_3s_0$ is
\[
\langle H_3 s_0, \Pi_1 \rangle = 2\cdot 3! = 12,
\]
and the squared norm of $\Pi_1$ is
\[
\|\Pi_1\|^2 
= 57 + \tfrac{2}{3}(1+c)^2.
\]

Therefore the curvature contribution at order $1$ is
\[
\|\mathrm{Proj}_{\operatorname{span}\{\Pi_1\}}(H_3 s_0)\|^2
= \frac{\langle H_3 s_0,\Pi_1\rangle^2}{\|\Pi_1\|^2}
= \frac{144}{\,57 + \tfrac{2}{3}(1+c)^2\,}.
\]

In particular, we get a strictly positive improvement
\[
\mathcal C^{(1)} \;=\; \mathcal B^{(1)} \;+\; \frac{144}{\,57 + \tfrac{2}{3}(1+c)^2\,},
\]
which is strictly less than the full variance $\|H_3 s_0\|^2=6$ (since $\Pi_1$ is not parallel to $H_3s_0$ for any real~$c$).

At $m=2$, we still get
\[
\mathcal B^{(2)}_{\text{score}}=\big\|\operatorname{Proj}_{\widetilde{\mathcal{T}}_2(0)}(H_3 s_0)\big\|^2=0.
\]
It is immediate that
\[
\mathcal B^{(2)}_{\mathrm{jet}}
\;=\;\| \operatorname{Proj}_{\mathcal T_2}(H_3 s_0)\|^2
\;=\; \mathcal C^{(1)}.
\]
In particular, $\mathcal B^{(2)}_{\mathrm{jet}}>0$ for every real $c$. Since \(\mathcal B^{(2)}_{\text{score}}=0\) and \(\mathcal C^{(2)}\geq \mathcal B^{(2)}_{\mathrm{jet}}>0\) for all \(c\in\mathbb{R}\), we have demonstrated strict improvement also at \(m=2\) with Theorem \ref{thm:curvature-corrected}.

The sufficient conditions stated in Proposition \ref{prop:single-Naug-relations} can be checked in this example with \(m=2\): If \(\operatorname{Proj}_{\mathcal N_m^{\mathrm{aug}}}\widetilde{Z}_0\neq0\) and \(\mathcal N_m^{\mathrm{aug}}\not\subseteq \widetilde{\mathcal{T}}_m\), then strict improvement holds:
  \[
  \mathcal C_{\mathrm{aug}}^{(m)} \;>\; \mathcal B_{\mathrm{score}}^{(m)}.
  \]
Towards this, recall the Bell remainder for $\eta_2$:
\[
\eta_2 \;=\; \tfrac12\widetilde{Y}_2 + W_2,\qquad
W_2 \;=\; \tfrac14 Y_1^2 s_0
= s_0\big(H_4+2H_3+5H_2+4H_1+3H_0\big).
\]
Decompose $W_2$ into its projection onto $\mathcal T_2$ and the normal (residual) part
\[
W_2 = \operatorname{Proj}_{\mathcal T_2}W_2 \;+\; W_2^\perp,\qquad
W_2^\perp := \operatorname{Proj}_{\mathcal T_2^\perp}W_2.
\]

We want to find the residual
\[
W_2^\perp \;=\; W_2 - \operatorname{Proj}_{\mathcal T_2}(W_2),\qquad 
\mathcal T_2=\operatorname{span}\{\eta_1,\eta_2\}.
\]
 
Using \(\langle H_m s_0, H_n s_0\rangle = \delta_{mn}\,n!\), we get
\begin{align*}
\|W_2\|^2
&= 123 \\[4pt]
\|\eta_1\|^2 &=3,\\[4pt]
\|\eta_2\|^2
&
= 2c^2 + 20c + 123,\\[4pt]
\langle \eta_1,\eta_2\rangle &= 4 + 2(5+c) = 14+2c,\\[4pt]
\langle W_2,\eta_1\rangle &=  14,\\[4pt]
\langle W_2,\eta_2\rangle &= 105 + 10c.
\end{align*}
 
Write \(\operatorname{Proj}_{\mathcal T_2}(W_2)=a\,\eta_1+b\,\eta_2\). The coefficients \((a,b)\) solve
\[
\begin{pmatrix}
\|\eta_1\|^2 & \langle \eta_1,\eta_2\rangle \\[0.5ex]
\langle \eta_1,\eta_2\rangle & \|\eta_2\|^2
\end{pmatrix}
\begin{pmatrix} a \\ b \end{pmatrix}
=
\begin{pmatrix} \langle W_2,\eta_1\rangle \\ \langle W_2,\eta_2\rangle \end{pmatrix},
\]
Solving, we obtain the explicit coefficients
\[
a = \frac{8c^2 - 70c + 252}{2c^2 + 4c + 173}, \qquad b = \frac{2c + 119}{2c^2 + 4c + 173}
\]

The residual (normal) component is
\[
W_2^\perp
=
W_2-a\,\eta_1-b\,\eta_2.
\]

Using
\[
\|W_2^\perp\|^2
=
\|W_2\|^2
-
\big(a\langle W_2,\eta_1\rangle
+
b\langle W_2,\eta_2\rangle\big),
\]
together with $\|W_2\|^2=123$, we find
\[
\|W_2^\perp\|^2 = \frac{114c^2 + 72c + 5256}{2c^2 + 4c + 173}.
\]

Since
the $H_3$ coefficient of $W_2^\perp$ is
\[
2-2b=2(1-b),
\]
we obtain
\[
\langle H_3 s_0, W_2^\perp\rangle
=
12(1-b),
\qquad
\langle H_3 s_0, W_2^\perp\rangle^2
=
144(1-b)^2.
\]

\begin{figure}[t]
    \centering
    \includegraphics[width=0.35\textwidth]{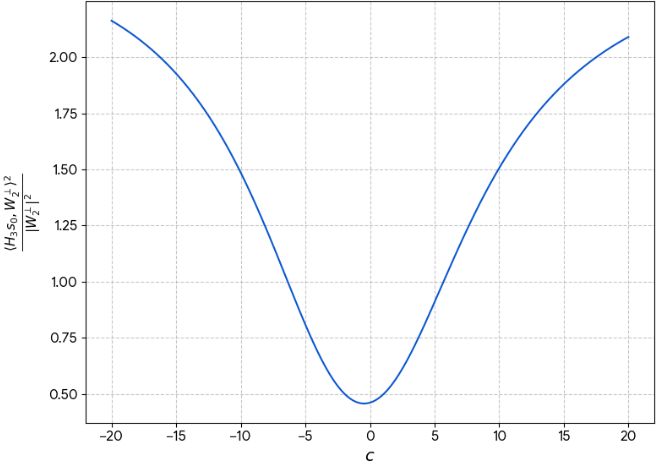}
    \caption{The ratio $\frac{\langle H_3 s_0, W_2^\perp \rangle^2}{\|W_2^\perp\|^2}$ as a function of the parameter $c$.}
    \label{fig:leakage_ratio}
\end{figure}

Therefore,
\[
\frac{\langle H_3 s_0, W_2^\perp \rangle^2}{\|W_2^\perp\|^2} = \frac{96(c^2 + c + 27)^2}{(2c^2 + 4c + 173)(19c^2 + 12c + 876)}.
\]
Note that the residual $W_2^\perp$ has a non-vanishing component in the $H_3$ direction for every \(c\in \mathbb{R}\). With \(\mathcal N_2^{\mathrm{aug}}=\mathrm{span}\{W_2^\perp\}\) and \(\widetilde{\mathcal{T}}_2=\operatorname{span}\{\widetilde{Y}_1(0),\widetilde{Y}_2(0)\}
=\operatorname{span}\{2(H_1+H_2)s_0,\; (2c\,H_2-12H_0) s_0\}\), this implies that the sufficient conditions \(\operatorname{Proj}_{\mathcal N_2^{\mathrm{aug}}}(H_3 s_0)\neq 0\) and \(\mathcal N_2^{\mathrm{aug}}\not\subseteq \widetilde{\mathcal{T}}_2\) are satisfied for all real \(c\), and Proposition \ref{prop:single-Naug-relations} guarantees that
  \(
  \mathcal C_{\mathrm{aug}}^{(2)} \;>\; \mathcal B_{\mathrm{score}}^{(2)},
  \) with the correction along the new normal direction in the augmented space as computed explicitly above. For reader convenience, we provide a visualization of the correction we get by means of a graph that depicts how \(\frac{\langle H_3 s_0, W_2^\perp \rangle^2}{\|W_2^\perp\|^2}\) varies with \(c\) in Figure \ref{fig:leakage_ratio}.
  
\end{example}

Finally, we consider a variation of Example \ref{ex:sat3} to validate our claims in Proposition \ref{prop:single-Naug-relations} regarding possible improvements over \(\mathcal B_{\mathrm{score}}^{(m)}\)
by explicitly using the extrinsic directions \(W_2^\perp, \dots, W_m^\perp\) in addition to \(\mathcal{B}^{(2)}_{\text{jet}}\), serving to compare lower bounds obtained using only the \(\{\eta_k\}_{k=1}^m\) and those that use \(\{\widetilde{Y}_k\}_{k=1}^m\).

\begin{example}[Strict improvement of $\mathcal C_{\mathrm{aug}}^{(2)}$ over 
$\mathcal B_{\mathrm{score}}^{(2)}$]
\label{ex:asym-quartic-aug}

Consider the one-dimensional location model
\[
f(x;\theta)
=
\frac{1}{Z}\exp\!\left(- (x-\theta)^4 - \alpha (x-\theta)^3\right),
\qquad \alpha\neq 0,
\]
where
\[
Z = \int_{\mathbb R} \exp(-u^4-\alpha u^3)\,du < \infty,
\quad u:=x-\theta.
\]
This model is smooth, regular, non-symmetric, and lies outside the exponential family.

\paragraph{Unbiased estimator.}
Since this is a location family, define
\[
\mu := \mathbb E_0[u]
= \frac{1}{Z}\int_{\mathbb R} u\,e^{-u^4-\alpha u^3}\,du,
\]
and consider the estimator
\[
T(X) := X - \mu.
\]
Then
\[
\mathbb E_\theta[T(X)]
=
\theta + \mathbb E_0[u] - \mu
=
\theta,
\]
so $T$ is unbiased for all $\theta$. The centered error is
\[
Z_0 := T(X)-\theta = u-\mu.
\]

\paragraph{Scores and score images.}
The log-density is
\[
\log f(x;\theta) = -u^4 - \alpha u^3 - \log Z,
\]
hence the first two score functions are
\[
Y_1(u) = 4u^3 + 3\alpha u^2,
\qquad
Y_2(u) = -12u^2 - 6\alpha u.
\]
Let $s_\theta=\sqrt{f(\cdot;\theta)}$ and $\widetilde Y_j = Y_j s_\theta$.

\paragraph{Jets via Bell polynomials.}
Using the Bell polynomial identity
\[
\eta_k = \partial_\theta^k s_\theta
= s_\theta\, B_k\!\Big(\tfrac12 Y_1,\dots,\tfrac12 Y_k\Big),
\]
the first two jets are
\begin{align*}
\eta_1
&= \tfrac12 Y_1 s_\theta
= s_\theta\Big(2u^3 + \tfrac32\alpha u^2\Big),\\
\eta_2
&= s_\theta\Big(\tfrac14 Y_1^2 + \tfrac12 Y_2\Big).
\end{align*}
Since
\[
Y_1^2 = 16u^6 + 24\alpha u^5 + 9\alpha^2 u^4,
\]
we obtain
\[
\eta_2
=
s_\theta\Big(
4u^6 + 6\alpha u^5 + \tfrac94\alpha^2 u^4
-6u^2 -3\alpha u
\Big).
\]

\paragraph{Bell remainder and augmented normal direction.}
By definition,
\[
\eta_2 = \tfrac12 Y_2 s_\theta + W_2,
\]
where the Bell remainder is
\[
W_2
=
s_\theta\Big(
4u^6 + 6\alpha u^5 + \tfrac94\alpha^2 u^4
\Big).
\]
Let
\[
\mathcal T_2 = \operatorname{span}\{\eta_1,\eta_2\},
\qquad
\mathcal N_2^{\mathrm{aug}} = \operatorname{span}\{W_2^\perp\}
\subset \mathcal T_2^\perp.
\]

\paragraph{Strict improvement at order $m=2$.}
The estimator residual in $L^2(\mu)$ is
\[
\widetilde Z_0 = (u-\mu)s_\theta,
\]
which has a nonzero odd component. Notice that $W_2$ contains the odd term
$6\alpha u^5 s_\theta$ and $\alpha\neq 0$.

The second-order score-based Bhattacharyya bound is
\[
\mathcal B_{\mathrm{score}}^{(2)}
=
\big\|\operatorname{Proj}_{\widetilde{\mathcal T}_2}\widetilde Z_0\big\|^2,
\qquad
\widetilde{\mathcal T}_2=\operatorname{span}\{\widetilde Y_1,\widetilde Y_2\}.
\]
The augmented bound is
\[
\mathcal C_{\mathrm{aug}}^{(2)}
=
\big\|\operatorname{Proj}_{\mathcal T_2}\widetilde Z_0\big\|^2
+
\big\|\operatorname{Proj}_{\mathcal N_2^{\mathrm{aug}}}\widetilde Z_0\big\|^2.
\]

For generic asymmetric quartic models, the conditions $\operatorname{Proj}_{\mathcal N_2^{\mathrm{aug}}}\widetilde Z_0\neq 0$
and $\mathcal N_2^{\mathrm{aug}}\not\subseteq\widetilde{\mathcal T}_2$ are satisfied, and Proposition~\ref{prop:single-Naug-relations} implies
\[
\mathcal C_{\mathrm{aug}}^{(2)} > \mathcal B_{\mathrm{score}}^{(2)}.
\]
The magnitude of the improvement obviously depends on the asymmetry
parameter $\alpha$. For instance, with \(\alpha=1.95\), we get the numerically verifiable values \(\mathcal B_{\mathrm{score}}^{(2)}\approx 0.16, \quad C_{\mathrm{aug}}^{(2)}\approx 0.19\). 
This strict improvement occurs at fixed order $m=2$ and is due, in this particular example, mainly to the \(\mathcal B_{\mathrm{jet}}^{(2)}\) component in \(\mathcal C_{\mathrm{aug}}^{(2)}\).
\end{example}

\begin{remark}
Our primary goal in these examples is conceptual: to demonstrate that statistical inefficiency can be interpreted geometrically as curvature of the embedded statistical manifold in \(L^2(\mu)\).
The Hermite–Gaussian examples are deliberately chosen for analytical transparency, allowing all geometric quantities (jets, second fundamental form, normal components) to be computed explicitly and rigorously.
While the resulting correction depends on the estimator and may not always be computationally convenient, this dependence is intrinsic: inefficiency itself is estimator-specific.
Importantly, the framework applies beyond toy examples. For instance, constrained or misspecified estimators and semiparametric models with nuisance parameters could give rise to nonzero curvature terms and admit strict refinements over the CRB. Furthermore, we also demonstrated a possible numerical computation of our bounds using Example \ref{ex:asym-quartic-aug} where, in fact, our primary interest was to illustrate Proposition \ref{prop:single-Naug-relations}.
\end{remark}

\section{Conclusion}

In this work we developed (non-asymptotic) refinements of the classical Cramér-Rao lower bound, and some of its own refinements such as the Bhattacharyya-type bounds that use higher-order score functions, in a manner consistent with the local differential geometry of the statistical model. The CRB case incorporated curvature-aware corrections derived from the second fundamental form on tangent vectors of the statistical manifold embedded into a fixed Hilbert space using the square root map, while the higher-order version also relied on a natural extension. Furthermore, we considered the decomposition of the corresponding jets in terms of the true scores using a special case of the Faà di Bruno formula to demonstrate improvements over Bhattacharyya-type bounds that use log-likelihood derivatives. 
We detailed explicit examples where the geometric correction terms result in definite improvements over the CRB and its variants. This confirms that curvature-aware corrections, when properly projected and interpreted within a Riemannian framework, yield provable and relevant inefficiency measures.
Further research may examine the generality and limitations of these geometric corrections in broader model classes, such as those with higher-order nonlinearity in the statistical embedding or with weaker assumptions on regularity than what we allowed, for instance. The connection between extrinsic curvature and information-theoretic optimality, particularly beyond the second-order regime, remains an area with substantial theoretical and methodological potential.

\paragraph{Acknowledgements.} We are thankful to Prof. Pillai for sharing his article \cite{pillai}, which formed the initial motivation for this work.

\bibliography{sn-bibliography}
\end{document}